\tikzset{partition/.style={fill,circle,inner sep=1pt}}
\tikzset{partition/.style={fill,circle,inner sep=1pt},
         part/.style={baseline=0,scale=0.5,bend left=45},
         partlabel/.style={below}}
\tikzstyle{pnt}=[draw,ellipse,fill,inner sep=1pt]
\tikzstyle{opnt}=[ ]
\tikzstyle{pntt}=[draw,ellipse,fill,inner sep=0.5pt]
\tikzstyle{point}=[draw,ellipse,fill,inner sep=2pt]
\newtheorem{theorem}{Theorem}[section]
\newtheorem{lemma}[theorem]{Lemma}
\theoremstyle{definition}
\newtheorem{definition}{Definition}[section]
\newtheorem{fact}{Fact}[section]
\newtheorem{notation}{Notation}[section]
\newtheorem{remark}{Remark}[section]
\newtheorem{example}[theorem]{Example}
\newcommand{\CC}{\mathbb{C}}
\newcommand{\ZZ}{\mathbb{Z}}
\newcommand{\OO}{\mathcal{O}}
\newcommand{\ssyt}{\mathrm{SSYT}}
\newcommand{\syt}{\mathrm{SYT}}
\newcommand{\pro}{\mathcal{P}}
\newcommand{\evac}{\mathcal{E}}
\newcommand{\inc}[2]{\mathrm{Inc}_#2(#1)}
\DeclareMathOperator{\rd}{read}
\DeclareMathOperator{\ins}{ins}
\DeclareMathOperator{\dis}{Dist}
\DeclareMathOperator{\out}{Out}
\DeclareMathOperator{\inn}{In}
\DeclareMathOperator{\rank}{rank}
\newcommand{\makeset}[2]{ \{#1\,:\, #2\} }
\newcommand{\du}{\;\sqcup\;}
\begin{document}
\title{Proofs and generalizations of a homomesy conjecture of \\Propp and Roby}

\author{
  Jonathan Bloom\\
  \texttt{Lafayette College}\\
  \texttt{bloomjs@lafayette.edu}\\
  \and
  Oliver Pechenik\\
  \texttt{University of Illinois at Urbana-Champaign}\\
    \texttt{pecheni2@illinois.edu}\\
  \and 
  Dan Saracino\\
    \texttt{Colgate University}\\
    \texttt{dsaracino@colgate.edu}
}

\maketitle
\begin{abstract}
Let $G$ be a group acting on a set $X$ of combinatorial objects, with finite orbits, and consider a statistic $\xi : X \to \CC$. Propp and Roby defined the triple $(X, G, \xi)$ to be \emph{homomesic} if for any orbits $\OO_1, \OO_2$, the average value of the statistic $\xi$ is the same, that is \[\frac{1}{{|\OO_1|}}\sum_{x \in \OO_1} \xi(x) = \frac{1}{|\OO_2|}\sum_{y \in \OO_2} \xi(y).\]

In 2013 Propp and Roby conjectured the following instance of homomesy. Let $\ssyt_k(m \times n)$ denote the set of semistandard Young tableaux of shape $m \times n$ with entries bounded by $k$. Let $S$ be any set of boxes in the $m \times n$ rectangle fixed under $180^\circ$ rotation. For $T \in \ssyt_k(m \times n)$, define $\sigma_S(T)$ to be the sum of the entries of $T$ in the boxes of $S$. Let $\langle \pro \rangle$ be a cyclic group of order $k$ where $\pro$ acts on $\ssyt_k(m \times n)$ by promotion. Then $(\ssyt_k(m \times n), \langle \pro \rangle, \sigma_S)$ is homomesic.

We prove this conjecture, as well as a generalization to cominuscule posets. We also discuss analogous questions for tableaux with strictly increasing rows and columns under the K-promotion of Thomas and Yong, and prove limited results in that direction.
\end{abstract}
\ytableausetup{boxsize=1.1em}

\section{Introduction}

Let $G$ be a group acting on a set $X$ of combinatorial objects, with finite orbits, and  $\xi : X \to \CC$ any complex-valued function. The triple $(X, G, \xi)$ is \emph{homomesic} if for any orbits $\OO_1, \OO_2$, the average value of the statistic $\xi$ is the same, that is \[\frac{1}{|\OO_1|}\sum_{x \in \OO_1} \xi(x) = \frac{1}{|\OO_2|} \sum_{y \in \OO_2} \xi(y).\] If $X$ is finite, this implies that the average value of $\xi$ on any orbit is the average value of $\xi$ on $X$. The concept of homomesy was first isolated by Propp and Roby \cite{propp.roby:fpsac, propp.roby:arxiv}, although an example of homomesy was previously conjectured in \cite{panyushev} and proved in \cite{armstrong.stump.thomas}.

Let $\ssyt_k(m \times n)$ denote the set of semistandard Young tableaux of shape $m \times n$, i.e., $m$ rows and $n$ columns, with entries bounded above by $k$. We orient our tableaux in matrix coordinates (English notation).  For $T \in \ssyt_k(m \times n)$ and $S$ a set of boxes in the $m \times n$ rectangle, define $\sigma_S(T)$ to be the sum of the entries of $T$ in the boxes of $S$. Further let $\pro$ be the promotion operator and $\langle \pro \rangle$ be the cyclic group generated by $\pro$.  With this language our main result, which proves a conjecture of Propp and Roby \cite{propp.roby:dartmouth}, is the following.  

\begin{theorem}\label{thm:P--R}
If $S$ is fixed under $180^\circ$ rotation, then $(\ssyt_k(m \times n), \langle \pro \rangle, \sigma_S)$ is homomesic.
\end{theorem}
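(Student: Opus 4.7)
The plan is to decompose the statistic by rotation-orbits of boxes, use the closed form for evacuation on rectangular SSYT, and reduce the theorem to a sum-equality identity across $\pro$-orbits exchanged by evacuation.

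First, $180^\circ$ rotation partitions $S$ into two-element orbits $\{s,\bar s\}$ together with possibly a single fixed box $c$ (the central box of the rectangle, present only when $m$ and $n$ are both odd). Since $\sigma_S$ is additive in the boxes, by linearity of orbit averages it suffices to verify homomesy for each of these pieces independently. For rectangular SSYT, evacuation has the explicit form $\evac(T)(i,j)=(k+1)-T(m+1-i,\,n+1-j)$, so $\evac(T)(s)=(k+1)-T(\bar s)$ and therefore
\[T(s)+T(\bar s) = (k+1) + T(s) - \evac(T)(s).\]
For the central fixed point, the corresponding identity reads $T(c) + \evac(T)(c) = k+1$.

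The relation $\evac\pro\evac = \pro^{-1}$ implies that $\evac$ is a bijection from each $\pro$-orbit $\OO$ onto a $\pro$-orbit $\evac(\OO)$ of the same cardinality. Summing the identity above over $\OO$ therefore reduces the asserted orbit averages ($k+1$ per pair, $(k+1)/2$ for the center) to the single claim that, for every box $s$ and every $\pro$-orbit $\OO$,
\[\sum_{T\in\OO} T(s) = \sum_{T\in\evac(\OO)} T(s).\]
If $\evac(\OO)=\OO$ this equality is automatic from the bijection of $\OO$ with itself, but small examples (e.g.\ a pair of size-$4$ $\pro$-orbits in $\ssyt_4(2\times 2)$) show that $\evac$ can genuinely swap two distinct $\pro$-orbits, so the easy case alone does not suffice.

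Resolving this sum-equality is the main obstacle. The plan is to pass to a rotationally equivariant combinatorial model for $\ssyt_k(m\times n)$---a candidate being noncrossing lattice-path tuples via Lindstr\"om--Gessel--Viennot, Fomin growth diagrams, or $\mathfrak{sl}_k$-webs in the sense of Fontaine--Kamnitzer and Petersen--Pylyavskyy--Rhoades---in which $\pro$ becomes a literal cyclic rotation and each box-value $T(s)$ becomes a local statistic of rotational position. In such a model, $\OO$ and $\evac(\OO)$ correspond to the same labeled cyclic structure read in opposite directions, forcing the multiset equality $\{T(s):T\in\OO\}=\{T(s):T\in\evac(\OO)\}$ and a fortiori the needed sum-equality. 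The technical work lies in verifying uniformly in $s$ that the translation of $T(s)$ into the chosen model is genuinely a local statistic compatible with the rotational reading; such a uniform argument should moreover extend to the cominuscule-poset generalization mentioned in the abstract once the rectangle is replaced by the appropriate cominuscule poset.
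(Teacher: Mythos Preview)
Your reduction is exactly the one the paper makes: decompose by rotation-orbits of boxes, use $\evac(T) = T^+$ on rectangular SSYT (their Theorem~2.4(d)), and use $\evac\pro\evac=\pro^{-1}$ (their Theorem~2.4(b)) to see that it suffices to prove, for every box $B$ and every $\pro$-orbit $\OO$, the identity
\[
\sum_{T\in\OO} T(B) \;=\; \sum_{T\in\evac(\OO)} T(B).
\]
The paper states and proves the stronger \emph{multiset} equality: their Lemma~3.2 asserts that the multiset $\dis(T,B)=\{\,\sigma_{\{B\}}(\pro^i(T)):0\le i<k\,\}$ coincides with $\dis(\evac(T),B)$. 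So you have correctly isolated the heart of the theorem.

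The gap is that you do not prove this identity. Your proposed plan---pass to a model (LGV paths, growth diagrams, $\mathfrak{sl}_k$-webs) in which $\pro$ is literal rotation and each $T(B)$ is a ``local statistic of rotational position''---is not carried out, and the final sentence is honest about this: ``the technical work lies in verifying\dots.'' That technical work is the entire content of the theorem; the reduction above is easy. Moreover, the web models of Petersen--Pylyavskyy--Rhoades and Fontaine--Kamnitzer realize promotion as rotation only for $2$- and $3$-row SYT, not general $\ssyt_k(m\times n)$, and even there it is not at all clear that the entry $T(i,j)$ translates into a statistic that is manifestly invariant under the reflection sending $\OO$ to $\evac(\OO)$. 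The LGV picture likewise does not turn promotion into a rotation. So among the three candidates you list, only growth diagrams actually lead to a proof, and the argument is specific rather than formal.

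Here is what the paper actually does with growth diagrams, in case you want to close the gap yourself. Encode each $\pro^i(T)$ as its chain of shapes $(\pro^i(T)_{\le j})_{0\le j\le k}$ and arrange these chains as the rows of a staircase array; the columns then encode the chains of $\evac(\pro^i(T))$. Fixing a box $B$, shade every shape in the array that contains $B$. The multiset $\dis(T,B)$ is the multiset of ranks of the \emph{leftmost} shaded entries in $k$ consecutive rows, while $\dis(\evac(T),B)$ is the multiset of ranks of the \emph{bottommost} shaded entries in $k$ consecutive columns. Tracing the staircase boundary between shaded and unshaded regions gives a lattice path in which row-minimal shaded shapes are the feet of down-steps and column-minimal shaded shapes are the feet of up-steps; since the path begins and ends at the same height, the two multisets of heights agree. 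This is the substitute for your hoped-for ``same cyclic structure read in opposite directions,'' and it is the step your proposal is missing.
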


This result looks remarkably similar to certain homomesies discovered by Propp and Roby \cite{propp.roby:fpsac, propp.roby:arxiv}. These latter results relate to rowmotion (a.k.a.~Fon-der-Flaass map, Panyushev complementation, etc.)~and promotion of order ideals in rectangular posets ${\bf m} \times {\bf n}$. Note that `promotion' in this order ideal context is quite different from the promotion we use for tableaux; the coincidence of terminology reflects the relation worked out in \cite{striker.williams} between tableau promotion for 2-row rectangles and order ideal promotion in the type-A positive root poset. 
Recently Einstein and Propp \cite{einstein.propp} have shown that tableau promotion on $\ssyt_k(m \times n)$ is naturally conjugate to a piecewise-linear lift of order ideal promotion to the rational points with denominator dividing $n$ in the order polytope of ${\bf m} \times {\bf (k - m)}$.
We do not know any concrete relation between Theorem~\ref{thm:P--R} and any of the results of \cite{propp.roby:fpsac, propp.roby:arxiv}.

We structure this paper as follows. In Section~\ref{sec:basics} we define promotion and evacuation for semistandard Young tableaux and prove various important properties. Although most of these results for \emph{standard} Young tableaux may be readily found in the literature (see e.g.~\cite{stanley}), analogous statements and proofs for semistandard Young tableaux are hard to find, if not completely missing from the literature. In Section~\ref{sec:main_result} we prove Theorem~\ref{thm:P--R} via growth diagrams. In Section~\ref{sec:jdt} we give an alternative proof by jeu de taquin in the standard Young tableau case.  In Section~\ref{sec:cominuscule} we prove a generalization to cominiscule posets. In Section~\ref{sec:K} we consider a natural generalization to increasing tableaux under K-promotion and prove the 2-row rectangular case, while identifying counterexamples for general shapes.

\section{Basic facts about promotion and evacuation}\label{sec:basics}
Both promotion and evacuation have been extensively studied in the context of standard Young tableaux (SYT). See~\cite{stanley} for a comprehensive survey. It has been widely believed that most results about promotion and evacuation generalize to the semistandard setting; however, explicit statements and proofs have been mostly lacking from the literature. The purpose of this section is to provide explicit definitions of promotion and evacuation for semistandard Young tableaux, and to prove some of their most important combinatorial properties.

For partitions $\mu\subset \lambda$, we denote by $\ssyt_k(\lambda/\mu)$  the set of all semistandard Young tableaux of skew shape $\lambda/\mu$ with \emph{ceiling} $k$, i.e., all entries are $\leq k$.  If $\mu = \emptyset$ we simply write $\ssyt_k(\lambda)$ and refer to these as \emph{straight-shapes}. Moreover, if $\lambda$ is an $n\times m$ rectangle, then we write $\ssyt_k(n\times m)$ instead of $\ssyt_k(\lambda)$.  

\subsection{Promotion}
For the convenience  of the reader we recall the definitions of \emph{jeu de taquin} and \emph{rectification} which will be needed to define \emph{promotion}.  

Let $T\in \ssyt_k(\lambda/\mu)$.  We say a box in the shape $\mu$ is an \emph{inner corner} of $T$ provided that the boxes immediately below and to the right of it are not in $\mu$. (Consequently, if $T$ is a straight-shape it has no inner corners.)  Beginning with any  inner corner $b_0$ we define the (unique) sequence of boxes $b_0, b_1, \ldots, b_m$ so that $b_{i+1}$ is whichever of the boxes immediately below or to the right of $b_i$  contains the smaller value. In the case where these two boxes contain equal values $b_{i+1}$ is chosen to be the box below $b_i$. In the case where exactly one of these boxes lies outside $\lambda$ the other is chosen.  This process continues until we reach a box $b_m$ such that the boxes below it and to its right fall outside $\lambda$.   Using this sequence $b_0, b_1, \ldots, b_m$, we obtain a new tableau by sliding the value in $b_{i+1}$ into $b_i$.  It is clear from this construction that $b_0$ is contained in the resulting tableau. We refer to this construction as a \emph{jeu de taquin slide}. A straightforward argument shows that the resulting tableau is also semistandard.

Since a jeu de taquin slide ``removes" an inner corner, it follows that starting with $T$ and iteratively performing such slides until all inner corners have been removed will result in a  straight-shaped semistandard tableau.  This iterative process of obtaining a straight-shaped tableau from a skew-shaped one is called \emph{jeu de taquin}.   It is a classic theorem (see~\cite[p.~15]{fulton}) that the resulting straight-shaped tableau is independent of the order in which the  jeu de tauqin slides are performed.  Consequently, there is a unique straight-shaped tableau obtained by performing jeu de taquin on $T$.  This straight-shaped tableau is called the  \emph{rectification} of $T$.

For the remainder of this section we will fix an arbitrary partition $\lambda$.  We are now in a position to define promotion.

\begin{definition}\label{def:promotion}
Let $T\in \ssyt_k(\lambda)$.  The \emph{promotion} $\pro(T)$ of $T$  is given by the following construction.  If $T$ has no 1's then let $\pro(T)$ be the result of decrementing all the values of $T$ by 1.  Otherwise, do the following.  First, delete all the boxes in $T$ that contain a 1 and rectify the resulting skew tableau.  Next, decrement all the values by 1 and then place a $k$ in all the empty boxes resulting from sliding (so that $\pro(T)$ has the same shape as $T$). 
\end{definition}

\begin{example}\label{ex:promotion}

Let $k=6$ and $T =  \ytableaushort{1123,3344,55}\, .$ Then $\pro(T) = \ytableaushort{1223,2366,44}\, .$ \qed
\end{example}

We will also need an alternative definition of promotion based on Bender-Knuth operations, which, following Striker and Williams~\cite{striker.williams}, we  call \emph{toggles} for short.  For a fixed $1\leq i\leq k$, define the toggle $\tau_i$ as follows.  For a given row of $T\in \ssyt_k(\lambda)$ locate all the boxes that contain either an $i$ or an $i+1$ and do not have any other $i$'s or $(i+1)$'s in their columns.  Suppose the number of such $i$'s is $a$ and the number of such $(i+1)$'s is $b$.  Now replace these $i$'s and $(i+1)$'s by $b$ $i$'s and $a$ $(i+1)$'s so that the resulting tableau is semistandard.  Repeat the process for every row of $T$ and denote by $\tau_i(T)$ the resulting semistandard Young tableau.  Observe that we may perform the individual row operations in any order; hence $\tau_i(T)$ is well defined.  

The alternative characterization of promotion is as follows.   

\begin{theorem}\label{thm:toggle}
For any $T\in \ssyt_k(\lambda)$, we have $\pro(T) = \tau_{k-1} \circ \tau_{k-2} \circ \cdots \circ\tau_1(T)$.
\end{theorem}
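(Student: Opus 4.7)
The plan is to prove Theorem~\ref{thm:toggle} by tracing the orbit of the $1$'s of $T$ through the successive toggles and matching this orbit with the sliding path of the jeu de taquin-based promotion. Because $T \in \ssyt_k(\lambda)$ is semistandard, every $1$ sits in the first row and collectively these boxes form a horizontal strip $H$. The guiding idea is that $H$ is progressively ``pushed down'' by $\tau_1, \tau_2, \dots, \tau_{k-1}$, in a way mirroring how the empty cells left by deleting the $1$'s are rectified in Definition~\ref{def:promotion}.

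First I would analyze the single toggle $\tau_1(T)$. The columns meeting $H$ split into those whose second row contains a $2$ (``locked,'' fixed by $\tau_1$) and those whose second row is empty or contains a value $> 2$ (``free''). A row-by-row count then shows that $\tau_1$ precisely exchanges the counts of $1$'s and $2$'s among the free cells of the top row, so that the newly-created $2$'s are exactly the positions to which the top row of the rectified skew tableau has slid an entry from below. I would then proceed by induction on $i$, proving that $\tau_i \circ \cdots \circ \tau_1(T)$ coincides with the tableau obtained from $T$ by partially carrying out rectification through its first $i$ ``levels,'' decrementing every already-processed entry, and leaving cells as yet untouched unchanged. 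In the inductive step I use that, under the inductive hypothesis, the boxes holding symbols descended from $H$ form a horizontal strip in row $i+1$; the toggle $\tau_{i+1}$ then performs the same swap between rows $i+1$ and $i+2$ that the next round of jeu de taquin slides performs. Entries of $T$ never swept by the process are touched by exactly one toggle that decrements them by $1$, and the $k$'s introduced by $\tau_{k-1}$ coincide with the entries used to fill the empty boxes in Definition~\ref{def:promotion}.

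The main obstacle is the bookkeeping needed to sustain the invariant that, after $\tau_i \circ \cdots \circ \tau_1$, the boxes holding the images of $H$ form a horizontal strip one row lower. I expect this to reduce to a careful local lemma about the interaction of a single Bender--Knuth toggle with a horizontal strip of freshly moved entries, analogous to well-known arguments in the standard Young tableau case (see \cite{stanley}). Semistandardness introduces additional complications around columns with repeated values in adjacent rows, but these should be handled uniformly by the ``locked'' versus ``free'' column dichotomy established in the analysis of $\tau_1$. A possible alternative would be to standardize $T$ to an SYT $\widetilde{T}$, verify that both $\pro$ and the Bender--Knuth composition commute with standardization in the appropriate sense, and then invoke the SYT version of the theorem; this would trade the combinatorial bookkeeping above for a pair of commutativity lemmas.
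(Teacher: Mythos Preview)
Your inductive scheme—showing that $\tau_i\circ\cdots\circ\tau_1(T)$ coincides with ``promotion with ceiling $i+1$ and all larger entries frozen''—is exactly the framework the paper adopts (the inductive step is their Lemma~\ref{lem:inductive_step}, stated as $\pro = \tau_k\circ\pro_k$ on $\ssyt_{k+1}$). The gap is in your proposed proof of that step. You assert that after $i$ toggles the boxes ``descended from $H$'' lie in row $i+1$, and that $\tau_{i+1}$ then acts as a swap between rows $i+1$ and $i+2$. This conflates \emph{values} with \emph{rows}: the toggle $\tau_{i+1}$ acts on the values $i+1$ and $i+2$, which in a general SSYT occupy many different rows, and the cells vacated during rectification form a horizontal strip spread across several rows, not confined to one. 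Already in Example~\ref{ex:promotion_toggle} the new $6$'s land in row~$2$, not row~$5$; and for the $2\times 2$ standard tableau $\begin{smallmatrix}1&2\\3&4\end{smallmatrix}$ one has $\tau_1(T)=T$, so after one toggle the ``descendant of $H$'' is still in row~$1$. The picture of $H$ descending one row per toggle, and the row-$i{+}1$/row-$i{+}2$ local analysis built on it, is therefore incorrect as stated.

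The paper proves the inductive step from the other end of the alphabet. It rewrites $\tau_k$ as an operation $t_k$ (delete the $k$'s, slide each $(k+1)$ up if there is an empty box above and otherwise as far left as possible, then refill), and the substantive content is a noncrossing lemma (Lemma~\ref{lem:noncrossing}) showing that successive jeu de taquin slides from inner corners on the same row can never cross. This forces full rectification to move the $(k+1)$'s by exactly the up-then-left rule of $t_k$. That lemma is precisely the ``careful local lemma'' you anticipate needing, but it is formulated for the maximal value and makes no single-row hypothesis; your locked/free column dichotomy for $\tau_1$ does not by itself supply it. As for your standardization alternative: it is conceivable, but a single Bender--Knuth toggle on an SSYT does not correspond to any simple product of adjacent-value toggles on its standardization, so the commutativity lemmas you would need are not routine and would likely end up reproving something equivalent to Lemma~\ref{lem:noncrossing}.
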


Before we discuss and prove this theorem, we give an example.

\begin{example}\label{ex:promotion_toggle}
Using the tableau $T$ from Example~\ref{ex:promotion} we obtain

\begin{align*} 
T = \ytableaushort{1123,3344,55}&\xrightarrow{\quad\tau_1\quad}\ytableaushort{1223,3344,55}\xrightarrow{\quad\tau_2\quad}\ytableaushort{1223,2344,55}\xrightarrow{\quad\tau_3\quad}\ytableaushort{1223,2344,55}\\
&\xrightarrow{\quad\tau_4\quad}\ytableaushort{1223,2355,44}\xrightarrow{\quad\tau_5\quad}\ytableaushort{1223,2366,44}\, ,
\end{align*}
which we see is the same as $\pro(T)$.  
\qed
\end{example} 

A proof of Theorem~\ref{thm:toggle} appears as a special case of Lemma 5.2 in Gansner's 1980 paper~\cite{gansner}, although the result is easy to miss there since the paper focuses more on evacuation (and does not use either of the terms ``promotion" or ``evacuation").  For convenience, we present here a short proof of Theorem~\ref{thm:toggle}.

We begin by reformulating the Bender-Knuth toggle $\tau_k(T)$ for $T\in \ssyt_{k+1}(\lambda)$.  

\begin{definition}
Let $T\in\ssyt_{k+1}(\lambda)$.  We define $t_k(T)\in\ssyt_{k+1}(\lambda)$ by the following 3-step construction.  Step 1 is to delete all the $k$'s from $T$.  For step 2, identify all the $(k+1)$'s with an empty box directly above.  Slide these $(k+1)$'s up one unit into these empty boxes.  Then, slide all the remaining $(k+1)$'s as far left as possible.   For step 3, decrement all the $(k+1)$'s by 1 and place $(k+1)$'s in all the empty boxes so that the resulting tableau $t_k(T)\in \ssyt_{k+1}(\lambda)$.
\end{definition}

It is not hard to check that $\tau_k(T) = t_k(T)$ for $T\in\ssyt_{k+1}(\lambda)$.

\begin{example}\label{ex:t_k}
To illustrate the steps in our definition of $t_k$, consider $k=3$ and 
$$R = \ytableaushort{1123333344,23444,3,4}\,.$$ 
Starting with $R$, the 3 steps in the construction of $t_3(R)$ are
$$ \ytableaushort{112{*(white)}{*(white)}{*(white)}{*(white)}{*(white)}44,2{*(white)}444,{*(white)},4}\quad\longrightarrow\quad \ytableaushort{1124444{*(white)}{*(white)}{*(white)},24{*(white)}{*(white)}{*(white)},4,{*(white)}}\quad\longrightarrow\quad  \ytableaushort{1123333444,23444,3,4}\, .$$ 
Note that $t_3(R) = \tau_3(R)$.  \qed
\end{example}

We will also need a slight modification of the promotion operator $\pro$.  

\begin{definition}
We define  $\pro_i(T)$ to be the result of freezing all the elements of $T$ which are at least $i+1$ and then promoting (in the sense of Definition~\ref{def:promotion}) the unfrozen elements with ceiling $i$.  
\end{definition}

To prove Theorem~\ref{thm:toggle}, it will now suffice to prove the first equality in the following lemma.

\begin{lemma}\label{lem:inductive_step}
Let $T \in \ssyt_{k+1}$.  Then 
$$\pro(T) = t_k \circ \pro_k(T) = \tau_k \circ \pro_k(T)$$
\end{lemma}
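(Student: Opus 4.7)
The second equality is immediate: since $\pro_k(T)\in\ssyt_{k+1}(\lambda)$, the identification $t_k=\tau_k$ observed after the definition of $t_k$ applies. The substance is the first equality, which I plan to prove by re-executing the jeu de taquin rectification defining $\pro(T)$ in a carefully chosen order that separates it into two phases matching $\pro_k$ and $t_k$. Let $C\subseteq\lambda$ denote the cells of $T$ containing $(k+1)$. Since entries strictly increase down columns and weakly increase along rows with $(k+1)$ maximal, each cell of $C$ is a bottom-of-column cell, and $C$ forms a horizontal strip along the southeast boundary of $\lambda$. Set $\nu=\lambda\setminus C$; this is a Young sub-diagram, and the subtableau $T_{\leq k}$ of entries at most $k$ has shape $\nu$.

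The decisive observation is that in rectifying $T$ with its $1$'s removed, any jeu de taquin slide path that enters a cell of $C$ cannot return to $\nu$: the cells immediately below and to the right of a $C$-cell are either outside $\lambda$ or themselves in $C$. Consequently each slide path decomposes naturally into a $\nu$-prefix and a (possibly empty) $C$-suffix. By Sch\"utzenberger's confluence theorem, I may reschedule so that all $\nu$-prefixes are performed first; collectively they carry out the rectification of $T_{\leq k}$ (with its $1$'s removed) within $\nu$, which is precisely the rectification step of $\pro_k$. The remaining phase consists of the deferred $C$-suffixes. A $C$-suffix beginning by entering $C$ from above slides a $(k+1)$ upward into the vacated cell, matching the up-slide substep of $t_k$; a $C$-suffix beginning by entering $C$ from the left (which can happen only when the cell below the entry point lies outside $\lambda$) slides a $(k+1)$ one cell leftward and subsequently shifts neighboring $(k+1)$'s leftward in the same row, matching the left-slide substep of $t_k$. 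The terminal decrement and fill with $(k+1)$ then agree in both constructions.

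The principal obstacle is the careful case analysis of the $C$-suffixes: one must verify that the net motion of the $(k+1)$'s during the second phase agrees with the ``up before left'' prescription of $t_k$, and that the final positions of all $(k+1)$'s (including those that never move) coincide. Using that $(k+1)$ is maximal, so that tie-breaking in slides consistently prefers below, and that cells of $C$ sit at the bottoms of their columns, the case analysis becomes tractable but constitutes the technical heart of the proof.
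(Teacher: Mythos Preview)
Your overall approach coincides with the paper's: decompose the rectification of $T^-$ into a first phase that rectifies with the $(k+1)$'s frozen (matching $\pro_k$) and a second phase that moves the $(k+1)$'s into the vacated cells (matching step~2 of $t_k$). The paper frames the argument in exactly this way, so the strategy is not different.

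There is, however, a real gap in your proposed case analysis of the second phase. You correctly observe that a $C$-suffix entering $C$ from the left can only occur when the cell below the entry point lies outside $\lambda$, and you assert that such a suffix ``matches the left-slide substep of $t_k$.'' But $t_k$'s left-slide applies only to those $(k+1)$'s that do \emph{not} have an empty box directly above them after phase one. Your assertion fails precisely when a $(k+1)$ at $(i,j)$ has \emph{both} $(i-1,j)$ and $(i,j-1)$ vacant after phase one, with $(i+1,j-1)\notin\lambda$: in $t_k$ this $(k+1)$ slides up, whereas the $C$-suffix issuing from $(i,j-1)$ would pull it left. Neither tie-breaking (preference for the box below) nor the column-bottom property of $C$ settles this; what is needed is the fact that the slide whose $\nu$-prefix terminates at $(i-1,j)$ necessarily occurs \emph{before} the one terminating at $(i,j-1)$, so that the $(k+1)$ at $(i,j)$ has already been pulled upward by the time the latter suffix runs. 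The paper isolates exactly this ordering fact as a separate technical result, Lemma~\ref{lem:noncrossing} (successive slide paths from inner corners in the same row are nested, with later paths strictly to the left on each row), and invokes it explicitly to resolve the ambiguity. That noncrossing lemma, or an equivalent, is the missing ingredient in your plan; the tools you cite do not substitute for it.
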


In order to prove the first equality in Lemma~\ref{lem:inductive_step} we need the following technical result. 

\begin{lemma}\label{lem:noncrossing}
Let $T$ be any semistandard skew tableau and let $T_1$ be the result of a jeu de taquin slide applied to $T$, starting at inner corner $I_1$.  Likewise, let  $T_2$ be the result of a  jeu de taquin slide applied to $T_1$, starting at inner corner $I_2$, where $I_2$ is on the same row as $I_1$.  Then on any row of $T$ that contains boxes involved in the first slide and boxes involved in the second slide, the rightmost box involved in the second slide is strictly to the left of the rightmost box involved in the first slide.  
\end{lemma}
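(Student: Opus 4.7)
The plan is to first determine the relative position of $I_2$, then prove the column bound by induction on rows. Since $\mu$ is a partition, each of its rows contains at most one inner corner, so $I_2$ must be a new inner corner of $T_1$ created by removing $I_1$ from the inner shape. A brief check on the partition structure of $\mu\setminus\{I_1\}$ on row $i$ forces $I_2$ to be the cell immediately to the left of $I_1$; write $I_1=(i,j)$ and $I_2=(i,j-1)$.

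For each row $r$ visited by both paths, let $\alpha_r$ (resp.\ $\beta_r$) denote the largest column index of a cell of $P_1$ (resp.\ $P_2$) on row $r$; the claim is $\beta_r<\alpha_r$. We prove this by induction on $r$. Both paths move only down or right, and a down-step preserves the column, so the column at which $P_1$ (resp.\ $P_2$) enters row $r+1$ is exactly $\alpha_r$ (resp.\ $\beta_r$). The inductive step therefore reduces to a single-row claim: if both paths enter a row $r$ with $P_2$'s entry column strictly less than $P_1$'s, then both exit row $r$ with the same strict inequality. The base case $r=i$ has entry columns $j-1<j$.

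For the single-row claim, we use the explicit description of $T_1$: off the path $P_1$ it agrees with $T$, while on cells of $P_1$ each value is shifted one step toward $I_1$. Hence on row $r$, the comparisons made by $P_2$ involve either unchanged entries of $T$ (while $P_2$ remains strictly left of $P_1$'s row-$r$ trajectory) or values that were previously inputs to $P_1$'s own decisions (once $P_2$ has entered that trajectory). In the first regime, a down-move by $P_2$ immediately gives $\beta_r<\alpha_r$; in the second, each right-step previously taken by $P_1$ on row $r$ yielded a strict inequality $T(r,c+2)<T(r+1,c+1)$, and combined with row semistandardness this shows $P_2$ either moves down early or mimics $P_1$'s path exactly one column behind until column $\alpha_r-1$, where the comparison $T(r+1,\alpha_r-1)\le T(r+1,\alpha_r)=T_1(r,\alpha_r)$ plus the down-tiebreak forces $P_2$ off row $r$.

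The main difficulty is the bookkeeping: at each comparison one must determine which of the two adjacent cells lies on $P_1$ and apply the correct formula for $T_1$ there. Separating the analysis into the two regimes above, and treating the base case and inductive step uniformly once the entry columns are known, keeps the argument manageable.
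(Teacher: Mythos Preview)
Your proof is correct and follows essentially the same approach as the paper: both argue by induction on rows that $\beta_r<\alpha_r$, and both hinge on the same $2\times 2$ comparison at the critical column $\alpha_r-1$, where $T_1(r,\alpha_r)=T(r+1,\alpha_r)\ge T(r+1,\alpha_r-1)=T_1(r+1,\alpha_r-1)$ forces $P_2$ down via the tiebreak. The paper simply isolates this single $2\times 2$ claim and leaves the surrounding induction implicit, whereas you spell out the base case (including the location of $I_2$, which the paper does not discuss) and add an intermediate regime analysis; that extra bookkeeping is not actually needed, since once $P_2$ enters row $r$ strictly left of $\alpha_r$ it either descends early or reaches column $\alpha_r-1$, where your final comparison (equivalently, the paper's $2\times 2$ claim) finishes the step.
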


\begin{proof}
By a simple induction on rows in $T$ we see that it will suffice to prove the following claim.  Let $b_1,b_2, b_3,b_4$ be boxes in $T$ with values $x,y,z,$ and $w$, respectively,  so that together they form a 2$\times$2 contiguous square in $T$ as follows  
$$\ytableaushort{xy,zw}\ .$$
If the first jeu de taquin slide involves the boxes $b_2$ and $b_4$ and the second slide involves the box $b_1$, then the second slide must also involve the box $b_3$.  

To prove this claim, first note that as $T$ is semistandard then $z\leq w$. Next, observe that since our first slide involves boxes $b_2$ and $b_4$, the box $b_2$ after the first slide must contain $w$.  Additionally, it follows that $b_3$ is not involved in the first slide and therefore in $T_1$ box $b_3$ still contains $z$ .  As the second slide involves $b_1$, it follows that it must also involve $b_3$.  
\end{proof}

\begin{proof}[Proof of Lemma~\ref{lem:inductive_step}]
Let $T^-$ be the skew tableau obtained by deleting all the boxes in $T$ that contain a 1.  It will suffice to prove that the rectification of  $T^-$ is equivalent to the following two-step ``rectification".  (To help the reader we illustrate this equivalence in Example~\ref{ex:rect}.)   First, freeze all the $(k+1)$'s in $T^-$ and rectify the unfrozen boxes.  Observe that some of the $(k+1)$'s in the resulting object will have empty boxes above them or to their left.  Label these empty boxes, from right to left, $b_1,b_2,\ldots$  Next, slide the $(k+1)$'s into these empty boxes as described in step 2 of the definition of $t_k$.

To see the equivalence, note that during the full rectification of $T^-$ the $(k+1)$'s slide into (some of) the empty boxes $b_1,b_2,\ldots$. A priori there might be some ambiguity as to whether a given $(k+1)$ slides left or up.  (To see this, consider a $k+1$ such that in the two-step ``rectification" process this $k+1$ has empty boxes above and to its left.) Applying Lemma~\ref{lem:noncrossing} to the rectification of $T$ with $k+1$ frozen, we see that preference in such a case is always given to sliding up.   It now follows that regardless of which construction is used, the final arrangement of the $(k+1)$'s is identical.  
\end{proof}

\begin{example}\label{ex:rect}
If $k = 3$ and 
$$T = \ytableaushort{1111111344,22444,3,4}$$ 
then the rectification of $T^-$ is 
\begin{equation}\label{eq:1}
\ytableaushort{2234444,34,4}\, .
\end{equation}
On the other hand if we first freeze all the $4$'s in $T^-$ and then rectify the unfrozen boxes we obtain
$$\ytableaushort{223{*(white)}{*(white)}{*(white)}{*(white)}44,3{*(white)}444,{*(white)},4} \, .$$
Sliding the $4$'s gives the tableau in~(\ref{eq:1}), as claimed. \qed
\end{example}

\subsection{Evacuation}

We now define evacuation for semistandard Young tableaux.

\begin{definition}
For $T\in\ssyt_k(\lambda)$, define a sequence $\epsilon_1(T), \epsilon_2(T),\ldots, \epsilon_k(T)$ as follows.  Let $\epsilon_1(T) = \pro(T)$.  For $j\geq 2$, obtain $\epsilon_j(T)$ by freezing the entries $k,\ldots, k-(j -2)$ in $\epsilon_{j-1}(T)$ and promoting the remaining portion.
We define the \emph{evacuation} $\evac(T)$ of $T$ to be $\epsilon_k(T)$.  
\end{definition}

Using the Bender-Knuth toggle characterization of promotion, we see that evacuation has the following alternative description:
$$\evac = \tau_1\cdot (\tau_2\tau_1)\cdot\,\cdots\,\cdot (\tau_{k-3}\cdots \tau_1)\cdot (\tau_{k-2}\cdots \tau_1)\cdot (\tau_{k-1}\cdots \tau_1).$$
\begin{notation}

For rectangular $T\in \ssyt_k(m\times n),$ let $T^+$ denote the element of $\ssyt_k(m\times n)$ obtained by rotating $T$ by $180^\circ$ and then replacing each entry $i$ by $k+1-i.$
\end{notation}

In the context of  rectangular semistandard Young tableaux, we will also need the \emph{dual evacuation} of $T$, which we denote by $\evac'(T)$. This is defined analogously to evacuation except that here we use the inverse of promotion and freeze elements from smallest to largest.  It is easy to see that 
\begin{equation}\label{eq:dual_evac}
\evac'(T)=\evac(T^+)^+.
\end{equation}  
As above dual evacuation also has a characterization in terms of toggles.  Explicitly it is
$$\evac' = \tau_{k-1}\cdot (\tau_{k-2}\tau_{k-1})\cdot\,\cdots\,\cdot (\tau_3\cdots \tau_{k-2}\tau_{k-1})\cdot (\tau_2\cdots \tau_{k-2}\tau_{k-1})\cdot (\tau_1\cdots \tau_{k-2}\tau_{k-1}).$$

\begin{example}\label{ex:evacuation}
Using the $T$ from Example~\ref{ex:promotion}, we illustrate each step in the definition of evacuation below. The shading at each step denotes the boxes that are frozen.

\begin{align*} 
T=\ytableaushort{1123,3344,55}&\longrightarrow\ytableaushort{1223,23{*(gray)6}{*(gray)6},44}\longrightarrow\ytableaushort{1112,23{*(gray)6}{*(gray)6},3{*(gray)5}}\longrightarrow\ytableaushort{11{*(gray)4}{*(gray)4},22{*(gray)6}{*(gray)6},{*(gray)4}{*(gray)5}}\longrightarrow\ytableaushort{11{*(gray)4}{*(gray)4},33{*(gray)6}{*(gray)6},{*(gray)4}{*(gray)5}}\\
&\longrightarrow\ytableaushort{11{*(gray)4}{*(gray)4},{*(gray)3}{*(gray)3}{*(gray)6}{*(gray)6},{*(gray)4}{*(gray)5}}\longrightarrow\ytableaushort{{*(gray)2}{*(gray)2}{*(gray)4}{*(gray)4},{*(gray)3}{*(gray)3}{*(gray)6}{*(gray)6},{*(gray)4}{*(gray)5}},
\end{align*}
So 
$$\evac(T) = \ytableaushort{2244,3366,45}\, .$$ \qed
\end{example}

\subsection{A fundamental theorem on promotion and evacuation}
The following theorem contains the results we will need about promotion and evacuation.  For the special case of standard tableaux, proofs of parts (a), (b), and (c) are readily available in the literature (see, e.g., \cite[Theorem 2.1]{stanley}) and are essentially due to Sch\"utzenberger.

\begin{theorem}\label{thm:coxeter_rltn}
Let $T \in \ssyt_k(\lambda)$. Then 
\begin{enumerate}[(a)]
\item $\evac^2(T) = T$,
\item $\evac \circ \pro(T) = \pro^{-1} \circ \evac(T)$,
\item if $\lambda$ is rectangular, $\pro^{k}(T) = T$,
\item if $\lambda$ is rectangular, $\evac(T)=T^+.$
\end{enumerate}
\end{theorem}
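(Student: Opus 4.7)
My plan is to use the Bender--Knuth toggle characterizations established above: Theorem~\ref{thm:toggle} writes $\pro$ as a product of the $\tau_i$, and explicit toggle words are given for $\evac$ and $\evac'$. Throughout, the only algebraic facts I need about the toggles are (i) each $\tau_i$ is an involution, and (ii) $\tau_i\tau_j=\tau_j\tau_i$ whenever $|i-j|\ge 2$. Both follow immediately from the definition of $\tau_i$: distant toggles act on disjoint entry pairs $\{i,i+1\}$ and $\{j,j+1\}$, and within a single row the action decouples.

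Parts~(a) and~(b) are then formal. The classical proofs for standard tableaux (see, e.g., Stanley~\cite[Theorem 2.1]{stanley}) establish $\evac^2=\mathrm{id}$ and $\pro\circ\evac\circ\pro=\evac$ as word identities in the $\tau_i$ using only (i) and (ii). Since the semistandard toggles satisfy the same two relations, the same word-level manipulations transfer verbatim. I will carry them out explicitly, but they are essentially mechanical.

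Part~(c), the periodicity $\pro^k(T)=T$ for rectangular $\lambda$, is the main obstacle, since toggle algebra alone is insufficient. My plan is to invoke the growth-diagram apparatus developed in Section~\ref{sec:main_result}: a full $\pro$-orbit on a rectangle can be encoded on a horizontally-periodic growth diagram whose translation symmetry forces $k$-periodicity. An alternative (that avoids the forward reference) is a standardization argument reducing to Haiman's theorem $\pro^{mn}=\mathrm{id}$ on rectangular SYT, but the bookkeeping is delicate.

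For part~(d), the key observation is that the involution $\phi(T):=T^+$ conjugates $\tau_i$ into $\tau_{k-i}$: the $180^\circ$ rotation together with the substitution $i\mapsto k+1-i$ exchanges the entry pair $\{i,i+1\}$ with $\{k-i,k-i+1\}$. This immediately gives $\phi\circ\pro\circ\phi=\pro^{-1}$ and, by conjugating the evacuation toggle word term by term, $\phi\circ\evac\circ\phi=\evac'$; combined with~\eqref{eq:dual_evac}, this recovers the dual-evacuation identity but does not yet pin down $\evac=\phi$. To promote this consistency to the desired equality $\evac(T)=T^+$, I would induct on the ceiling $k$: the base case $k=m$ is immediate, since the unique rectangular SSYT (with entry $i$ in row $i$) is manifestly fixed by both $\evac$ and $\phi$, and the inductive step exploits the recursive freezing in the definition of $\epsilon_j(T)$ to peel off the maximum value and reduce to a rectangle with smaller ceiling. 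Pinning down this last identification, in particular verifying that the freezing step intertwines $\evac$ with $\phi$ in the correct way, is the delicate point of the argument.
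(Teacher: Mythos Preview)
Your treatment of (a) and (b) matches the paper's: both reduce to the word identities in Stanley's \cite[Lemma~2.2]{stanley}, using only that each $\tau_i$ is an involution and that distant toggles commute.

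For (c) and (d), however, your strategy diverges from the paper's and has gaps.

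\textbf{Part (c).} You assert that ``toggle algebra alone is insufficient'' and reach for growth diagrams or standardization. But the very lemma you invoke for (a) and (b) \emph{also} contains the identity $y^k=z'z$; substituting toggles gives $\pro^k=\evac'\circ\evac$. The paper then finishes (c) in one line using (d) and \eqref{eq:dual_evac}: $\evac'\circ\evac(T)=\evac'(T^+)=\evac(T)^+=T$. So (c) follows from toggle algebra \emph{plus} (d), with no extra machinery. Your proposed forward reference to Section~\ref{sec:main_result} would in any case be circular, since the proof of Lemma~\ref{lem:main_gd} uses that $D_0$ and $D_k$ have equal rank, which is exactly $k$-periodicity.

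\textbf{Part (d).} Your induction on the ceiling $k$ does not go through as stated. After applying $\epsilon_1=\pro$ and freezing the $k$'s, the unfrozen portion of $\epsilon_1(T)$ has shape $(m\times n)$ minus the boxes now containing $k$; this shape is generally \emph{not} a rectangle, so the inductive hypothesis (which is only for rectangular $\lambda$) does not apply to it. One cannot simply ``peel off the maximum value and reduce to a rectangle with smaller ceiling.'' You flag this step as delicate, but it is in fact broken without substantial additional argument. The paper instead proves (d) directly via RSK: for rectangular $T$ one has $\rd(T^+)=\rd(T)^+$, and Fulton's Duality Theorem (Fact~\ref{thm:reverse}) gives $\ins(w^+)=\evac(\ins(w))$, whence $T^+=\ins(\rd(T)^+)=\evac(\ins(\rd(T)))=\evac(T)$. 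Note too the paper's logical order: (d) is established first and independently, and (a)--(c) are then deduced from toggle identities together with (d).
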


\begin{proof}[Proof of parts (a)-(c)]
We take part (d) as given. (Part (d) is proved below without reference to (a)--(c).)  
We imitate the proof of \cite[Theorem 2.1]{stanley} (based on an idea of Haiman~\cite{haiman}), using the formulation of promotion in terms of Bender-Knuth toggles.
 
Let $G$ be the group with generators $x_1,\ldots, x_{k-1}$ and relations
\begin{equation}
        \begin{aligned}
        x_i^2 &= 1,\quad 1\leq i\leq k-1\\
        x_ix_j &= x_jx_i, \quad \textrm{if } |i-j|>1.
        \end{aligned}
\label{eq:relations}
\end{equation}
Let
\begin{equation*}
        \begin{aligned}
        y&=x_{k-1}x_{k-2}\cdots x_1\\
        z&=x_1\cdot (x_2x_1)\cdot\,\cdots\,\cdot (x_{k-3}\cdots x_1)\cdot (x_{k-2}\cdots x_1)\cdot (x_{k-1}\cdots x_1)\\
        z'&=x_{k-1}\cdot (x_{k-2}x_{k-1})\cdot\,\cdots\,\cdot (x_3\cdots x_{k-2}x_{k-1})\cdot (x_2\cdots x_{k-2}x_{k-1})\cdot (x_1\cdots x_{k-2}x_{k-1}).
        \end{aligned}
\end{equation*}
By \cite[Lemma 2.2]{stanley}, the following hold in $G$:
\begin{equation}
       \begin{aligned}
       z^2&=(z')^2=1\\
       y^k&=z'z\\
       zy&=y^{-1}z.
       \end{aligned}
\label{eq:laws}
\end{equation}
Since the Bender-Knuth  toggles $\tau_1,\ldots, \tau_{k-1}$ satisfy the defining relations (\ref{eq:relations}) of $G$, the equations in (\ref{eq:laws}) hold after we replace $x_i$ by $\tau_i$.  After these replacements, $y$ becomes $\pro$ and $z$ becomes $\evac$.  This proves parts (a) and (b) of the theorem.

Now assume $T\in\ssyt_k(m\times n)$ is rectangular.  In this case $z'$ becomes $\evac'$, which immediately yields $\pro^k=\evac'\circ\evac$.  Additionally, 
$$\evac'\circ\evac(T) = \evac'(T^+) = \evac(T)^+ = T,$$
where the first and third equalities follow from part (d) and the second equality is just Equation~\ref{eq:dual_evac}. This proves part (c).
\end{proof}

For the proof of part (d) we will need the basic language of reading words and the Robinson-Schensted-Knuth (RSK) correspondence in the context of SSYT.  Those unfamiliar with these ideas may consult \cite[Chapters 1--4]{fulton}.  

To set notation we let $\ins (w)$ be the insertion tableau  of a word $w$ under RSK, and for any tableau $P$ we let $\rd(P)$ denote its reading word. The main ingredient for our proof of part (d) is the following standard fact, which is a special case of part 4 of the Duality Theorem of \cite[p.~184]{fulton}.   

\begin{fact}\label{thm:reverse}
Fix $k>0$ and let $w=w_1\cdots w_n$ be a word in the letters $\{1,\ldots,k\}$ and $w^+ = (k+1-w_n)(k+1-w_{n-1})\ldots (k+1-w_1)$.  If $\ins(w) = P$, then $\ins(w^+) = \evac(P)$.
\end{fact}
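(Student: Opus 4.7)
The plan is to prove Fact~\ref{thm:reverse} using Fomin growth diagrams, exploiting their $180^\circ$ rotational symmetry. Given $w = w_1 \cdots w_n$, I would encode it as a filling of an $n \times k$ grid with a mark at position $(i, w_i)$ for each row $i$, and then label the $(n{+}1)(k{+}1)$ lattice vertices with partitions using Fomin's local growth rules, starting from empty partitions along the north and west boundaries. The standard theorem states that reading the successive boundary partitions along the east and south sides recovers the pair $(P,Q)$ with $\ins(w)=P$.

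Next, I would observe that rotating the grid by $180^\circ$ sends each mark at $(i, w_i)$ to $(n{+}1-i,\, k{+}1-w_i)$, so the rotated filling is precisely the filling associated with $w^+$. The crucial ingredient is Fomin's local symmetry: the forward growth rule, which computes the SE corner of a unit cell from the NW, N, W corners together with the mark, is equivalent under $180^\circ$ rotation to the rule computing the NW corner from the SE, S, E corners and the mark. Consequently, the unique partition labeling of the grid determined by $w$ grown from the NW corner agrees, after rotation, with the unique labeling determined by $w^+$ grown from the new NW corner (the original SE corner). Reading the insertion tableau of $w^+$ from this new SE boundary thus reads off the partitions along the \emph{original} north-west boundary, in reverse order. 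It therefore suffices to identify this ``reverse read'' of the NW boundary of the original diagram with $\evac(P)$.

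The main obstacle is this last identification. I would prove it by induction on $n$: the base case $n=1$ is a direct check that evacuating a single-box tableau with entry $a$ yields a single box with entry $k{+}1{-}a$. For the inductive step, I would peel off the bottom row of the growth diagram, which corresponds to deleting $w_n$, and compare the resulting boundary data with one iteration of the Sch\"utzenberger procedure applied to $P$; the core local identity required is the compatibility between Fomin's growth rule on a single cell and the Bender--Knuth toggle $\tau_i$ featured in the toggle formula for $\evac$ established above in this section. If the growth-diagram bookkeeping proves unwieldy, a backup approach is a Knuth-equivalence argument: one verifies that reverse-complement preserves Knuth equivalence by directly checking that it sends each of the two elementary Knuth relations to the other (the complement exchanges the strict/weak roles of the inequalities while the reverse exchanges the two forms of the relation), reducing the Fact to showing $\ins(\rd(P)^+) = \evac(P)$, which one then proves by induction on $|\lambda|$ via corner-removal together with the toggle characterization of $\evac$.
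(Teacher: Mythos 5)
The paper does not actually prove this Fact: it is quoted as a special case of part 4 of the Duality Theorem of Fulton's book, so any self-contained argument is necessarily ``different from the paper.'' Unfortunately, your primary argument has a genuine gap at its crucial step. You claim that the partition labeling of the growth diagram of $w$ agrees, after $180^\circ$ rotation, with the labeling of the growth diagram of $w^+$. The boundary conditions already rule this out: in the diagram for $w$ the vertex where the two output boundaries meet is labeled $\sh(P)$, while under rotation this vertex becomes the corner of the $w^+$-diagram that is labeled $\emptyset$ by definition. More conceptually, the forward local rule produces the unique largest of the four corners from the three smaller ones, while the backward rule produces the smallest from the three larger ones; rotating a single cell by $180^\circ$ reverses all containments, so these two rules are not literally exchanged by rotation, and no such ``local symmetry'' forces the two global labelings to coincide. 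The real content of a growth-diagram proof is the nontrivial theorem that running the local rules backward from the chain encoding $P$ computes $\pro(P)$ (equivalently, that the far boundary of the backward-extended diagram encodes $\evac(P)$); this is exactly the kind of statement the paper proves in Section~3 for its promotion growth diagrams, and it is precisely what your sketch assumes rather than establishes. The symptom is your final step, which proposes to read $\evac(P)$ off ``the original north-west boundary in reverse order'' --- but that boundary consists entirely of empty partitions.

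Your backup route is sounder but still incomplete. The check that reverse-complementation interchanges the two elementary Knuth relations is correct, and it validly reduces the Fact to the single identity $\ins(\rd(P)^+) = \evac(P)$. However, that identity is just the original statement specialized to $w = \rd(P)$, i.e., it carries the entire content of the Fact, and ``induction on $|\lambda|$ via corner-removal together with the toggle characterization of $\evac$'' is not yet an argument: corner removal acts on boxes while the toggles $\tau_i$ act on values, and you do not explain how deleting a corner of $P$ interacts with either $\rd(P)^+$ or the toggle product, nor how the bounded alphabet $\{1,\dots,k\}$ (on which the complement $i \mapsto k+1-i$ depends) enters the induction. To close this route you would need something like Sch\"utzenberger's deletion--insertion duality relating the removal of the extreme letter of $\rd(P)^+$ to one step of the evacuation algorithm on $P$ --- which is essentially the content of the cited Duality Theorem itself.
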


Armed with this fact we now prove part (d).  

\begin{proof}[Proof of part (d)]
As $T$ is rectangular, we have $\rd(T^+) = \rd(T)^+$.   Combining this observation with Fact~\ref{thm:reverse} yields
$$T^+ = \ins(\rd(T^+)) = \ins(\rd(T)^+) = \evac(\ins(\rd(T))) = \evac(T),$$
where the first and last equalities are the standard fact that the insertion tableau of a reading word is just the underlying tableau.  
\end{proof}

\section{Proof of the main result by growth diagrams}\label{sec:main_result}

\begin{definition}
Let $T\in \ssyt_k(m\times n).$  For a box $B$ in $T$, we define $\dis(T,B)$ to be the multiset 
$$\dis(T,B) =\makeset{\sigma_{\{B\}}(\pro^i(T))}{0\leq i\leq k-1}.$$
\end{definition}

\begin{lemma}\label{lem:main_gd}
If $T\in \ssyt_k(m\times n)$ and $B$ is a box in $T$, then  $\dis(T,B) =\dis(\evac(T), B)$.  
\end{lemma}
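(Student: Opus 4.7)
The plan is to reduce the lemma, via Theorem~\ref{thm:coxeter_rltn}, to a self-duality statement about the promotion orbit of $T$ alone, and then to establish this self-duality using a growth-diagram argument.

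First, applying parts (b) and (d) of Theorem~\ref{thm:coxeter_rltn} gives $\pro^i(\evac(T)) = \evac(\pro^{-i}(T)) = (\pro^{-i}(T))^+$ for every $i$. Since $\sigma_{\{B\}}((T')^+) = k + 1 - \sigma_{\{B^*\}}(T')$, where $B^*$ denotes the $180^\circ$-rotation of $B$, and since $\{-i \bmod k : 0 \leq i \leq k-1\} = \{0, 1, \ldots, k-1\}$, I obtain
\[
\dis(\evac(T), B) \;=\; \{\, k+1 - \sigma_{\{B^*\}}(\pro^{j}(T)) : 0 \leq j \leq k-1 \,\} \;=\; \{\, k+1 - x : x \in \dis(T, B^*) \,\}.
\]
Thus the lemma is equivalent to the self-duality statement $\dis(T, B) = \{k+1 - x : x \in \dis(T, B^*)\}$, a statement which involves only the orbit of $T$.

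Next, I would encode each $T_i := \pro^i(T)$ by its growth sequence $\emptyset = \lambda^{(0)}_i \subset \lambda^{(1)}_i \subset \cdots \subset \lambda^{(k)}_i = m \times n$, where $\lambda^{(j)}_i$ is the shape occupied by cells of $T_i$ with entries at most $j$. Then $\sigma_{\{B\}}(T_i) = v$ iff $B \in \lambda^{(v)}_i \setminus \lambda^{(v-1)}_i$, so $\dis(T, B)$ is entirely determined by the incidence pattern $[B \in \lambda^{(j)}_i]_{i, j}$. I would assemble the partitions $\lambda^{(j)}_i$ into a two-dimensional growth diagram (rows indexed by $i$, columns by $j$) and work out how promotion translates into the local structure of this diagram, describing the growth rule at each $2 \times 2$ cell using the jeu de taquin characterization of promotion from Definition~\ref{def:promotion}.

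The heart of the argument is then to exhibit an involutive symmetry of the growth diagram which sends each cell $(i, j)$ to (up to re-indexing) $(-i, k-j)$ and replaces each partition by the complement of its $180^\circ$-rotation inside $m \times n$, so that the incidence $B \in \lambda^{(v)}_i$ translates to $B^* \notin \lambda^{(k-v)}_{-i}$. Such a symmetry, once verified, immediately yields the required self-duality. The main obstacle will be setting up the growth diagram cleanly and confirming that its local rules are invariant under this symmetry; this is essentially a refinement, at the level of individual partitions in each growth sequence, of the $T \leftrightarrow T^+$ duality already established in Theorem~\ref{thm:coxeter_rltn}(d).
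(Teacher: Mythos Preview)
Your reduction in the first paragraph is correct: using Theorem~\ref{thm:coxeter_rltn}(b,c,d) one indeed has $\dis(\evac(T),B)=\{k+1-x:x\in\dis(T,B^*)\}$, so the lemma is equivalent to the self-duality $\dis(T,B)=\{k+1-x:x\in\dis(T,B^*)\}$. However, this reduction does not actually simplify the problem, and the proposed growth-diagram symmetry does not exist. Your claimed symmetry ``$(i,j)\mapsto(-i,k-j)$ with rot-complement'' would force $B\in\lambda^{(v)}_i\iff B^*\notin\lambda^{(k-v)}_{-i}$ for \emph{every} $i$ and $v$, which unwinds to the pointwise identity $\pro^i(T)(B)+\pro^{-i}(T)(B^*)=k+1$, i.e.\ $\pro^i(T)(B)=\pro^i(T^+)(B)$ for all $i,B$. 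That says $T=T^+$, and is false in general. The lemma is a statement about \emph{multisets} over~$i$; your proposed symmetry would prove a pointwise statement that is too strong.

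The paper's argument avoids this trap. Rather than looking for an involution on the diagram entries, it uses the offset growth diagram in which \emph{rows} encode $\pro^i(T)$ and \emph{columns} encode $\pro^j(\evac(T))$ (this column interpretation is verified directly from the definition of evacuation). After shading all partitions containing~$B$, $\dis(T,B)$ and $\dis(\evac(T),B)$ become, respectively, the multisets of ranks of leftmost-shaded-in-row and bottommost-shaded-in-column partitions. These are then compared by building a single lattice path from these minimal shaded shapes and observing that, since it starts and ends at the same height, the multisets of heights of down-step right endpoints and of up-step left endpoints coincide. No entrywise symmetry of the diagram is invoked; the equality emerges only at the multiset level. (This is also why the paper's proof extends to non-rectangular $\lambda$, whereas your reduction through Theorem~\ref{thm:coxeter_rltn}(d) cannot.)
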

We delay the proof of Lemma~\ref{lem:main_gd}, first showing how Theorem~\ref{thm:P--R} follows immediately. 
\begin{proof}[Proof of Theorem~\ref{thm:P--R}]
If $T\in \ssyt_k(m\times n)$, then it follows from Theorem~\ref{thm:coxeter_rltn}(b, c), that the orbits of $T$ and $\evac(T)$ under promotion are of the same size $\ell$, and that $\ell | k$.  By Lemma~\ref{lem:main_gd} and Theorem~\ref{thm:coxeter_rltn}(b) we have the following multiset equalities
$$\makeset{\sigma_{\{B\}}(\pro^i( T))}{0\leq i<\ell} = \makeset{\sigma_{\{B\}}(\pro^i \circ\evac( T))}{0\leq i<\ell} = \makeset{\sigma_{\{B\}}(\evac\circ\pro^i( T))}{0\leq i<\ell}.$$
Theorem~\ref{thm:coxeter_rltn}(d) now implies that $\dis(T,B) = \makeset{k+1-i}{i\in \dis(T,B^*) }$, where $B^*$ is the box corresponding to $B$ under $180^\circ$ rotation. This last statement immediately implies Theorem~\ref{thm:P--R}.  Specifically, the average value of $\sigma_S$ on any orbit is $\frac{(k+1)|S|}{2}$.
\end{proof}

The remainder of this section is devoted to a proof of Lemma~\ref{lem:main_gd}, using the growth diagrams of S.~Fomin. (For additional information on growth diagrams, cf.~\cite[Appendix~1]{ec} or \cite[$\mathsection 5$]{stanley}.) For $T \in \ssyt_k(\lambda)$, the \emph{growth diagram} of $T$ is built as follows. Let $T_{\leq j}$ denote the Ferrers diagram consisting of those boxes of $T$ with entry $i \leq j$. Identify $T$ with a particular multichain in the Young lattice, explicitly with the sequence of Ferrers diagrams $(T_{\leq j})_{0 \leq j \leq k}$. Note that this sequence uniquely encodes $T$. Now write this sequence of Ferrers diagrams horizontally from left to right. Below this sequence, draw, in successive rows, the sequences of Ferrers diagrams associated to $\pro^i(T)$ for $i \geq 1$. Above this sequence, draw, in successive rows, the sequences of Ferrers diagrams associated to $\pro^i(T)$ for $i \leq -1$. This gives a doubly infinite array.  Now offset each row one position to the right of the row immediately above it. Example~\ref{ex:growth_diagram} shows an example of this construction. The \emph{rank} of a partition in the growth diagram is the number of partitions appearing strictly left of it in its row, or equivalently the number of partitions appearing strictly below it in its column.

\begin{example}\label{ex:growth_diagram}\ 
Let $T \in \ssyt_5(2 \times 3)$ be the semistandard Young tableau \ytableaushort{123,344}\  . Then the growth diagram of $T$ is
\ytableausetup{boxsize=.5em}
\[\begin{array}{lllllllllllll} 
& & & {\Huge \ddots} \\ \\
\emptyset & \ydiagram{1} & \ydiagram{2} & \ydiagram{3,1} & \ydiagram{3,3} & \ydiagram{3,3} \\ \\  
& \emptyset & \ydiagram{1} & \ydiagram{2,1} & \ydiagram{3,2} & \ydiagram{3,2} & \ydiagram{3,3}  \\ \\ 
& & \emptyset & \ydiagram{2} & \ydiagram{3,1} & \ydiagram{3,1} & \ydiagram{3,2} & \ydiagram{3,3} \\ \\ 
& & & \emptyset & \ydiagram{2} & \ydiagram{2} & \ydiagram{2,1} & \ydiagram{3,1} & \ydiagram{3,3} \\ \\ 
& & & & \emptyset & \emptyset & \ydiagram{1} & \ydiagram{2} & \ydiagram{3,1} & \ydiagram{3,3} \\ \\
& & & & & \emptyset & \ydiagram{1} & \ydiagram{2} & \ydiagram{3,1} & \ydiagram{3,3} & \ydiagram{3,3}\\ \\
& & & & & & & {\Huge \ddots} \\ \\ 
\end{array}\] where the top displayed row corresponds to $T$ and the bottom displayed row to $\pro^5(T)=T$. Each row encodes a chain of length $5$, since we consider $T \in \ssyt_5(2 \times 3)$.
\ytableausetup{boxsize=1.1em} \qed
\end{example}

\begin{proof}[Proof of Lemma~\ref{lem:main_gd}]
Let $T$ and $B$ be as in the statement of the lemma, and consider the growth diagram of $T$. We darken all shapes in the growth diagram that contain the box $B$ (as in Example~\ref{ex:shading_and_Dyck_path}). Consider any row and the tableau $R$ it encodes. Now look at the column containing the rightmost Ferrers diagram in this row. It is well known that, for standard $T$, this column is the multichain of shapes that encodes $\evac(R)$.  (See~\cite[p. 427]{ec}.) In fact the same is true for semistandard $T$.  To verify this, we observe that, for $1\leq j\leq k$, the shape with rank $k-j$ in the indicated column is $\pro^j(R)_{\leq k-j}$, and we only need to verify that this is $\evac(R)_{\leq k-j}$, i.e., that $\pro^j(R)_{\leq k-j} = \epsilon_j(R)_{\leq k-j}$. But in fact more than this is true.  The placements of the integers $1,\ldots, k-j$ in $\pro^j(R)_{\leq k-j}$ are exactly the same as the placements of these integers in $\epsilon_j(R)$. This follows from the fact that for any $V$ in $\ssyt_k(\lambda)$, and every positive integer $m\leq k$, the placements of $1,\ldots,m+1$ in $V$ determine the placements of $1,\ldots, m$ in $\pro(V)$.  

It now follows from Theorem~\ref{thm:coxeter_rltn}(b) that if a column of the growth diagram encodes a tableau $V$, then the column to the left of this column encodes $\pro(V)$.  

Returning to the growth diagram of $T$, note that if we fix any set $\{R_i : i \in I\}$ of $k$ consecutive rows, then as multisets $\dis(T,B) = \{ \text{rank}(D_i) : i \in I\}$, where $D_i$ is the leftmost darkened shape in $R_i$. Similarly if we fix any set $\{C_j : j \in J\}$ of $k$ consecutive columns, then $\dis(\evac (T), B) = \{\text{rank}(D_j) : j \in J\}$, where $D_j$ is the bottommost darkened shape in $C_j$.  For ease of exposition we will call a darkened shape \emph{row-minimal} if it is the leftmost darkened shape in some row, and \emph{column-minimal} if it is the bottommost darkened shape in some column.  We call a darkened shape \emph{minimal} if it is either row-minimal or column-minimal.  

To see that $\dis(T,B) = \dis(\evac(T),B)$, let $R_0,\ldots, R_k$ be any set of $k+1$ consecutive rows of the growth diagram in descending order. Let $D_0$ and $D_k$ be the row-minimal shapes in rows $R_0$ and $R_k$, respectively, and note that the column containing $D_k$ is $k$ columns to the right of the column containing $D_0$.  Now list all the minimal shapes in row $R_0$ from left to right, followed by all the minimal shapes in row $R_1$, and so on, concluding with just the single minimal shape $D_k$ from row $R_k$.  Consider all these shapes to be vertices.  Note that two successive vertices $D_j,D_i$ in this list may have the same rank, $r$, if $D_j$ is column-minimal and $D_i$ is row-minimal (in the next row).  Whenever this occurs we insert a new vertex of rank $r+1$ to the right of $D_j$ and above $D_i$.  If the elements of the augmented list of vertices are $v_0,v_1,\ldots, $ we define a path $P$ in the first quadrant of the $xy$-plane by replacing each $v_i$ by the point $(i,\rank(v_i))$, and connecting successive points with up-steps $(1,1)$ and down-steps $(1,-1)$.  

By the preceding paragraph $\dis(T,B)$ is the multiset of ranks of row-minimal shapes in rows $R_1$ through $R_k$.  By the construction of $P$ this is the multiset $M_1$ of heights of right endpoints of down-steps in $P$.  Since $P$ starts and ends at the same height, $M_1$ equals the multiset $M_2$ of heights of left endpoints of up steps in $P$.  By the construction of $P$, $M_2$ is the multiset of ranks of column-minimal shapes in rows $R_0$ through $R_{k-1}$, i.e., $M_2$ is $\dis(\evac(T),B)$.  This concludes the proof.  

\end{proof}

\begin{example}\label{ex:shading_and_Dyck_path}
As in Example~\ref{ex:growth_diagram}, let $$T=  \ytableaushort{12{*(gray)3},344}\; ,$$ where we have shaded the box $B$. If we now shade all the Ferrers diagrams containing $B$, we obtain the following shaded growth diagram:
\ytableausetup{boxsize=.35em}
\[\begin{array}{lllllllllllll}
& & & {\Huge \ddots} \\ \\
\emptyset & \ydiagram{1} & \ydiagram{2} & \ydiagram[*(gray)]{3,1} & \ydiagram[*(gray)]{3,3} & \ydiagram[*(gray)]{3,3} \\ \\  
& \emptyset & \ydiagram{1} & \ydiagram{2,1} & \ydiagram[*(gray)]{3,2} & \ydiagram[*(gray)]{3,2} & \ydiagram[*(gray)]{3,3}  \\ \\ 
& & \emptyset & \ydiagram{2} & \ydiagram[*(gray)]{3,1} & \ydiagram[*(gray)]{3,1} & \ydiagram[*(gray)]{3,2} & \ydiagram[*(gray)]{3,3} \\ \\ 
& & & \emptyset & \ydiagram{2} & \ydiagram{2} & \ydiagram{2,1} & \ydiagram[*(gray)]{3,1} & \ydiagram[*(gray)]{3,3} \\ \\ 
& & & & \emptyset & \emptyset & \ydiagram{1} & \ydiagram{2} & \ydiagram[*(gray)]{3,1} & \ydiagram[*(gray)]{3,3} \\ \\
& & & & & \emptyset & \ydiagram{1} & \ydiagram{2} & \ydiagram[*(gray)]{3,1} & \ydiagram[*(gray)]{3,3} & \ydiagram[*(gray)]{3,3}\\ \\
& & & & & & & {\Huge \ddots} \\ \\ 
\end{array}
\] 
We have $\dis(T,B) = \{2,3,3,4,4\} = \dis(\evac(T),B)$, and we obtain the path 
\begin{center}
\begin{tikzpicture}[scale=0.5]
\def\U{-- ++(1,1) [fill] circle(1.3pt)}
\def\D{-- ++(1,-1) [fill] circle(1.3pt)}

\draw[line width = .4mm] (0,3) circle(1.3pt)\U\D\D\U\U\U\D\U\D\D;

\draw(0,0)--(10,0);
\draw(0,0)--(0,6);
\draw (0,0) circle(1.3pt);
\draw (0,1) circle(1.3pt);
\draw (0,2) circle(1.3pt);
\draw (0,3) circle(1.3pt);
\draw (0,4) circle(1.3pt);

\end{tikzpicture}\quad.
\end{center} \qed
\end{example}

\begin{remark}
Note that the same proof shows that Lemma~\ref{lem:main_gd} remains true for $T \in \ssyt_k(\lambda)$ even when $\lambda$ is not rectangular.
\end{remark}

\begin{remark}
Growth diagrams are closely related to the toggles of Section~\ref{sec:basics}. We illustrate with an example. Consider a path through the below growth diagram that starts at the left side and reaches the right by a sequence of `hops', either one Ferrers diagram up or one Ferrers diagram to the right. This path encodes a semistandard tableau in an obvious way. In this example, the solid line encodes the tableau $A=\ytableausetup{boxsize=1.1em} \ytableaushort{113,235}$, while the dotted line encodes $B=\ytableaushort{112,235}$. It follows easily from the definitions, that `bending' the path at a corner (or at either end) corresponds to applying a single toggle operator, $\tau_i$. In this example, $B = \tau_2(A)$ and $A = \tau_2(B)$. Note that this observation gives an alternative way of seeing that the central column encodes the evacuation of the top row.

$\begin{picture}(24,200)
\put(122,80){$\begin{tikzpicture}
\def\U{-- ++(0,.97) }
\def\D{-- ++(1.02,0) }
\def\SU{-- ++(0,.87) }
\def\SD{-- ++(0.92,0) }

\draw[line width = .4mm, color=gray] (0,3) \U\U\D\D\D;
\draw[line width = .4mm, color=gray, dotted] (0.1,3) \SU\D\U\D\SD;
\end{tikzpicture}\quad.$}
\ytableausetup{boxsize=.35em}
\put(65,85){$\begin{array}{lllllllllllll} 
& & & {\Huge \ddots} \\ \\
\emptyset & \ydiagram{1} & \ydiagram{2} & \ydiagram{3,1} & \ydiagram{3,3} & \ydiagram{3,3} \\ \\  
& \emptyset & \ydiagram{1} & \ydiagram{2,1} & \ydiagram{3,2} & \ydiagram{3,2} & \ydiagram{3,3}  \\ \\ 
& & \emptyset & \ydiagram{2} & \ydiagram{3,1} & \ydiagram{3,1} & \ydiagram{3,2} & \ydiagram{3,3} \\ \\ 
& & & \emptyset & \ydiagram{2} & \ydiagram{2} & \ydiagram{2,1} & \ydiagram{3,1} & \ydiagram{3,3} \\ \\ 
& & & & \emptyset & \emptyset & \ydiagram{1} & \ydiagram{2} & \ydiagram{3,1} & \ydiagram{3,3} \\ \\
& & & & & \emptyset & \ydiagram{1} & \ydiagram{2} & \ydiagram{3,1} & \ydiagram{3,3} & \ydiagram{3,3}\\ \\
& & & & & & & {\Huge \ddots} \\ \\ 
\end{array}$}
\end{picture}$
\qed
\end{remark}
\ytableausetup{boxsize=1.1em}

\section{A jeu de taquin proof of the standard case}\label{sec:jdt}

In this section we present an alternative proof of Lemma~\ref{lem:main_gd} using jeu de taquin in the context of standard young tableaux.  As this lemma is the main ingredient in the proof of Theorem~\ref{thm:P--R}, this section provides an alternative proof of the main result for the special case of standard Young tableaux.  

First we fix some notation.  Let $T\in\syt(m\times n)$, $k=nm$, and $B$ be any box in $T$. We define the \emph{labeled promotion path} of $T$ to be the double sequence
$$\rho(T) = (X_1,\ldots, X_\ell, \alpha_1,\ldots,\alpha_\ell)$$ 
given by the following algorithm.  First let $X_1$ be the box in the upper left corner in $T$.  For $i \geq 1$, recursively define $X_{i+1}$ to be either the box below $X_i$ or to the right of $X_i$ according to which contains the smaller value in $T$. If exactly one of these two boxes does not  exist in $T$, choose the one that exists.  We stop when we reach the lower right corner of $T$;  consequently, $\ell = n+m-1$.  Finally let $\alpha_i$ be the value in box $X_i$.     Now $\pro( T)$ may be defined with respect to $\rho(T)$ as follows.  First delete the value 1 in $X_1$, shift each $\alpha_i$ to box $X_{i-1}$, insert the value $k+1$ into box $X_\ell$, and then decrement all values in $T$ by one.  Likewise, we may define  $\pro^{-1}(T)$  by reversing the above algorithm. It will be helpful to observe that in the mapping $T \mapsto \pro( T)$ the values slide northwest, whereas in the mapping $T\mapsto \pro^{-1}(T)$ the values slide southeast. 

For an example of these definitions see Example~\ref{ex:promotion_path}.

\begin{example}\label{ex:promotion_path}
Consider $k=9$ and
$$T =  \ytableaushort{{*(gray)1}{*(gray)2}5,3{*(gray)4}{*(gray)7},68{*(gray)9}}\, .$$  
The sequence of boxes in $\rho(T)$ is given by the shaded path and the sequence of labels is $1,2,4,7,9$.  As described by the above paragraph we obtain
$$\pro(T)=\ytableaushort{134,268,579}\, .$$
\qed
\end{example}

 Now consider the following progression, which generates the orbit of $T$:
\begin{equation}\label{eq:promotionprogression}
T \longrightarrow \pro( T) \longrightarrow\cdots \longrightarrow \pro^{k-1} (T) \longrightarrow T.
\end{equation}
Setting $(X^j_1,\ldots,X^j_\ell, \alpha^j_1,\ldots,\alpha^j_\ell) = \rho(\pro^j (T))$ for $0\leq j<k$, we define the \emph{multiset}
$$\out(T,B) = \makeset{\alpha^j_i}{X^j_i = B, 0\leq j<k}.$$  
In words, this is the  multiset of all values that slide out of $B$ during the progression~(\ref{eq:promotionprogression}).  Likewise, we define the \emph{multiset}
$$\inn(T,B) = \makeset{\alpha^j_{i+1}-1}{X^j_i = B, 0\leq j<k},$$  
when $B$ is not the lower right box and define $\inn(T,B)$ to be the  $k$-element multiset consisting of all $k$'s when $B$ is the lower right box.  
This is the multiset of all values that slide into $B$, after they have been decremented by one. 

\begin{example}
Using the tableau $T$ from Example~\ref{ex:promotion_path} we have the following progression:
\begin{align*}
T =  \ytableaushort{125,347,689} &\longrightarrow \ytableaushort{134,268,579}\longrightarrow \ytableaushort{123,457,689}\longrightarrow \ytableaushort{126,348,579}\longrightarrow \ytableaushort{135,267,489}\longrightarrow \ytableaushort{124,356,789}\\
&\longrightarrow \ytableaushort{135,248,679}\longrightarrow \ytableaushort{124,367,589}
\longrightarrow \ytableaushort{136,258,479}\longrightarrow T.
\end{align*}
If $B$ is the box in the upper-right corner then 
$$\inn(T,B) = \{6,5,6\}\quad\textrm{and}\quad \out(T,B) = \{3,4,4\}.$$
Moreover, observe that 
$$\dis(T,B) = \{6,5,4,3\} \du \{5,4\} \du \{6,5,4\},$$ 
where the largest element of each interval is from the set $\inn$ and the smallest is from the set $\out$.  \qed
\end{example}

We now have the following relationship between the sets $\inn$ and $\out$.  

\begin{lemma}\label{lem:main}
As multisets,  $\out(T,B) = \out(\evac( T),B)$ and $\inn(T,B) = \inn(\evac( T),B)$.
\end{lemma}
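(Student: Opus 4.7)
The plan is to use Theorem~\ref{thm:coxeter_rltn}(d), which gives $\evac(T) = T^+$ for rectangular $T$, together with the rotational correspondence between promotion paths in the orbits of $T$ and $T^+$. First I would verify that the promotion path $\rho(\pro^j(T^+))$, drawn in the ambient $m \times n$ rectangle, is the $180^\circ$ rotation of the forward promotion path $\rho(\pro^{-1-j}(T))$. This follows by checking the defining recursion: promotion on $T^+$ begins at the upper-left of $T^+$ (which rotates to the lower-right of $T$) and at each step moves to the smaller of its two candidate entries; after the complementation $i \mapsto k+1-i$ this becomes the rule ``move to the larger candidate'' in $T$, which is exactly the reverse-promotion rule. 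Hence the rotated path of $\pro^j(T^+)$ equals the reverse-promotion path of $\pro^{-j}(T)$, which coincides with the forward path of $\pro^{-1-j}(T)$.

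Combining this with the value-level identity $\pro^j(T^+)(B) = k+1 - \pro^{-j}(T)(B^*)$ (where $B^*$ denotes the $180^\circ$ rotate of $B$), and noting that $\pro^{-j}(T) = \pro(\pro^{-1-j}(T))$ so that $\pro^{-j}(T)(B^*)$ is the value at $B^*$ immediately after the promotion step taking $\pro^{-1-j}(T)$ to $\pro^{-j}(T)$, a direct calculation gives
$$\out(T^+, B) = \makeset{k+1-y}{y \in \inn(T, B^*)} \quad\text{and}\quad \inn(T^+, B) = \makeset{k+1-x}{x \in \out(T, B^*)}.$$
Thus the lemma reduces to the intra-orbit duality
$$\out(T, B) = \makeset{k+1-y}{y \in \inn(T, B^*)} \quad\text{and}\quad \inn(T, B) = \makeset{k+1-x}{x \in \out(T, B^*)},$$
and these two statements are interchanged by swapping $B$ and $B^*$, so it suffices to prove one.

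The main obstacle is establishing this intra-orbit duality, which I would attack by constructing an explicit bijection between visits of $B$ and visits of $B^*$ on promotion paths in the orbit of $T$, matching a $B$-visit $(v_{\mathrm{old}}, v_{\mathrm{new}})$ to a $B^*$-visit $(k+1-v_{\mathrm{new}}, k+1-v_{\mathrm{old}})$. A natural candidate for the bijection comes from the trajectory structure of the orbit: it decomposes into $k$ label trajectories, each born with value $k$ at the lower-right corner $(m,n)$, decrementing and moving northwest along successive promotion paths, and dying with value $1$ at $(1,1)$. By tracking how the jeu de taquin slide governing each promotion step transports on-path entries, one should be able to pair trajectories through $B$ with trajectories through $B^*$ in the required complementary way, with the boundary cases (when $B$ or $B^*$ lies at a corner of the rectangle) handled directly. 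Making this matching precise via the slide mechanics is the technical heart of the proof.
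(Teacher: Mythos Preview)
Your reductions in steps 1--4 are correct: the rotational identity $\rho(\pro^j(T^+)) = \rho(\pro^{-1-j}(T))^{\mathtt{rot}}$ does follow from $\pro^j(T^+) = (\pro^{-j}(T))^+$ together with the observation that the promotion path of $S^+$ is the $180^\circ$ rotation of the reverse-promotion path of $S$, and this indeed yields $\out(T^+,B)=\{k+1-y:y\in\inn(T,B^*)\}$. But this reduction only rewrites the lemma; the resulting ``intra-orbit duality'' $\out(T,B)=\{k+1-y:y\in\inn(T,B^*)\}$ is equivalent in content to the original statement, and your step~5 does not prove it. The suggested bijection between trajectory visits to $B$ and to $B^*$ has no mechanism supplied: the $180^\circ$ symmetry you have established relates the orbit of $T$ to the orbit of $T^+$, which need not be the \emph{same} orbit, so it cannot by itself produce a matching of visits within a single orbit. ``Tracking how the slide transports on-path entries'' is not enough of an argument.

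What is missing is precisely the ingredient the paper supplies as Lemma~\ref{lem:traj=prompath}: the trajectory $\tau(\pro^{-i}(T))$ of a marker through the orbit of $T$ coincides, as a labeled path, with the promotion path $\rho(\pro^i(\evac(T)))$. This is a nontrivial fact (it is essentially \cite[Theorem~2.3]{stanley}), and it is exactly what converts your trajectory picture into a statement about promotion paths of $\evac(T)$, giving $\out(T,B)=\out(\evac(T),B)$ directly. The paper then deduces the $\inn$ equality by the same rotational symmetries you use in steps~2--3 (compare the displayed chain $\inn(T,B)=\overline{\inn_{\pro^{-1}}(\evac(T),B^*)}=\cdots=\inn(\evac(T),B)$). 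So your ``easy half'' matches the paper's, but the hard half---the link between trajectories in one orbit and promotion paths in the evacuated orbit---is exactly what you have left unproved.
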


Assuming Lemma~\ref{lem:main} for the moment, let us complete our alternative proof of Lemma~\ref{lem:main_gd}.  We define $[a,b] = \{a,a+1,\ldots ,b\}$ when $a\leq b$, and $[a,b] = \emptyset$ otherwise.  

\begin{proof}[Jeu de Taquin Proof of Lemma~\ref{lem:main_gd}]
It is clear from the definitions that for each value $b\in \inn(T,B)$, the following must occur at some point during our progression~\eqref{eq:promotionprogression}.  First, $b+1$ slides into box $B$ and is then decremented by one.  Then, for some number of steps $s\geq 0$ in our progression the value in $B$ does not slide and is only decremented by one for each of the $s$ steps.  Lastly, on the $(s+1)$st step the value $b-s$ slides out of box $B$; so $b-s\in\out(T,B)$.  This defines a correspondence so that every $b_i\in \inn(T,B)$ is paired with some $a_i\in \out(T,B)$ where $a_i = b_i - s_i$ for some $s_i\geq 0$.  Consequently, 
$$\dis(T,B) = [a_1,b_1]\du [a_2,b_2]\du\cdots\du  [a_r, b_r],$$ 
and the multiset $\{a_1,\ldots, a_r\} = \out(T,B)$. Now we claim that for $x \in [1,k]$, the multiplicity of $x$ in $\dis(T,B)$ is completely determined by the multisets $\out(T,B)$ and $\inn(T,B)$.  To see this define $\mathcal{A} = \makeset{a\in \out(T,B)}{a > x}$ and $\mathcal{B} = \makeset{b\in \inn(T,B)} {b\geq x}$. Since $a_i\leq b_i$, it is clear that every element in $\mathcal{A}$ must correspond to some element in $\mathcal{B}$.  But then the $|\mathcal{B}|-|\mathcal{A}|$ other elements in $\mathcal{B}$ must correspond to elements in $\out(T,B)\setminus \mathcal{A}$.  So, $|\mathcal{B}|-|\mathcal{A}|$ is the multiplicity of $x$ in $\dis(T,B)$.  By Lemma~\ref{lem:main} the same argument shows that $|\mathcal{B}|-|\mathcal{A}|$ must  also be the multiplicity of $x$ in $\dis(\evac(T),B)$, completing the proof.


\end{proof}

We now conclude this section with a proof of Lemma~\ref{lem:main}.  To begin let us return to the progression~\eqref{eq:promotionprogression}.  Imagine that at the beginning of this progression each box in $T$ contains a marker labeled with that box's value. (We think of the marker as a poker chip that can slide among the boxes in $T$.)   Now consider the marker with label $k$ in the lower right box in $T$.  Under each successive step in~(\ref{eq:promotionprogression}), this marker either remains in place, slides one unit left or one unit up and, regardless, we decrement its  label by 1.  Consequently, after $k-1$ steps this marker is labeled 1 and therefore is located in the upper left box of $\pro^{k-1} (T)$.  Consequently, this marker must have traveled along a sequence of $\ell=m+n-1$ boxes $B_1,\ldots, B_\ell$ that starts at the lower right box and ends at the upper left box.  Defining a sequence of labels $\beta_1, \ldots, \beta_\ell$ so that $\beta_1 = k$, $\beta_\ell = 1$,  and  $\beta_i$ is the label on our marker as it slides out of  $B_i$ we obtain another labeled path $\tau(T) = (B_1,\ldots, B_\ell; \beta_1,\ldots, \beta_\ell)$ called the \emph{trajectory}.  See Example~\ref{ex:trajectory} for an example.

\begin{example}\label{ex:trajectory}
For the  tableau in Example~\ref{ex:promotion_path}, we have the progression
\begin{align*}
T=\ytableaushort{125,347,68{*(gray)9}}\, &\rightarrow \ytableaushort{134,26{*(gray)8},579}\, \rightarrow \ytableaushort{123,45{*(gray)7},689}\, \rightarrow \ytableaushort{12{*(gray)6},348,579}\, \rightarrow \ytableaushort{13{*(gray)5},267,489}\, \rightarrow \ytableaushort{12{*(gray)4},356,789}\\ 
&\rightarrow \ytableaushort{1{*(gray)3}5,248,679}\, \rightarrow \ytableaushort{1{*(gray)2}4,367,589}\, \rightarrow \ytableaushort{{*(gray)1}36,258,479}\, ,
\end{align*}
where the shaded box represents the position of our marker at each step.  The trajectory is then
$$\tau(T) = \{(3,3), (2,3), (1,3), (1,2), (1,1); 9,7,4,2,1\},$$
where the boxes are indexed using matrix coordinates.  
\end{example}

\begin{remark}
Recall that promotion shifts the markers northwest and decrements their labels by one whereas  $\pro^{-1}$ does the opposite.  As a result, for each marker $M$ in $T$ there is some unique index $0\leq j<k$ so that $M$ is in the bottom right corner of $\pro^{j}(T)$.  As a result we may think of $\tau(\pro^{j}(T))$ as the ``trajectory" of the marker $M$.  
\end{remark}

\begin{lemma}\label{lem:traj=prompath}
We have $\tau(\pro^{-i}(T)) = \rho(\pro^i (\evac( T)))$.
\end{lemma}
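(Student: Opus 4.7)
The plan is to first reduce to the $i = 0$ case and then prove that base case directly. From parts (a) and (b) of Theorem~\ref{thm:coxeter_rltn}, an easy induction gives $\pro^{-i}(T) = \evac \circ \pro^i \circ \evac(T)$. Setting $S := \pro^i(\evac(T))$ converts the desired identity $\tau(\pro^{-i}(T)) = \rho(\pro^i(\evac(T)))$ into $\tau(\evac(S)) = \rho(S)$. Thus it suffices to prove $\tau(T) = \rho(\evac(T))$ for every rectangular SYT $T$, where the equality is to be interpreted as an equality of labeled paths: since $\tau$ traverses from $(m, n)$ to $(1, 1)$ while $\rho$ traverses from $(1, 1)$ to $(m, n)$, the claim amounts to $B_i = X_{\ell+1-i}$ and $\beta_i = \alpha_{\ell+1-i}$ for $1 \leq i \leq \ell$.

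For the base case I would induct on $i$. The anchor $i = 1$ is immediate: $B_1 = (m, n) = X_\ell$, and Theorem~\ref{thm:coxeter_rltn}(d) yields $\evac(T)(m, n) = k + 1 - T(1, 1) = k$, so $\beta_1 = k = \alpha_\ell$. For the inductive step, the marker exits $B_i$ during the promotion $\pro^{k - \beta_i}(T) \to \pro^{k - \beta_i + 1}(T)$, moving to $B_{i+1}$ along the jeu de taquin slide path of $\pro^{k - \beta_i}(T)$; more precisely, $B_{i+1}$ is the cell from which the hole entered $B_i$ during that slide, and its value there equals $\beta_i - 1$ after decrementing. I would then use the commutation relation $\evac \circ \pro = \pro^{-1} \circ \evac$ and the identity $\evac(T) = T^+$ to translate this local choice into information about $\evac(T)$ near $B_i$, matching it with the corresponding step from $X_{\ell-i}$ to $X_{\ell+1-i} = B_i$ in $\rho(\evac(T))$.

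The main obstacle will be making this inductive step precise: the trajectory's boxes are determined by jdt slide paths in many different tableaux $\pro^0(T), \pro^1(T), \ldots$, whereas $\rho(\evac(T))$ arises from a single jdt slide in $\evac(T)$. Bridging these two perspectives requires tracking how the ``relevant'' portion of the slide near $B_i$ in $\pro^{k - \beta_i}(T)$ can be recovered from $\evac(T)$; I would invoke the induction hypothesis $\beta_i = \evac(T)(B_i) = \alpha_{\ell+1-i}$ together with the semistandard inequalities to pin down the direction of the marker's next move. If a direct inductive argument along the trajectory proves too unwieldy, a viable alternative is to establish the symmetric identity for the minimum marker under $\pro^{-1}$ (which by the same conjugation applied to the dual path corresponds to $\rho(\evac(T))$ read in its natural direction) and combine the two, effectively pinning down the path by ``running the promotion'' forward from $(m,n)$ and backward from $(1,1)$ and meeting in the middle.
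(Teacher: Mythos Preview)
Your reduction to the $i=0$ case is correct and is exactly what the paper does: from Theorem~\ref{thm:coxeter_rltn}(a,b) one gets $\pro^i\circ\evac=\evac\circ\pro^{-i}$, so substituting $\pro^{-i}(T)$ for $T$ in the $i=0$ identity $\tau(T)=\rho(\evac(T))$ yields the general statement. The paper then does \emph{not} prove the $i=0$ case from scratch; it simply cites the proof of \cite[Theorem~2.3]{stanley}, noting that only the labels need to be tracked in addition.

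Where your proposal diverges is in attempting a direct induction along the path for the $i=0$ case. The anchor step is fine, but the inductive step has a real gap that you yourself identify: the box $B_{i+1}$ is determined by a jeu de taquin slide in $\pro^{k-\beta_i}(T)$, a tableau that differs from $\evac(T)$ and from every other $\pro^{k-\beta_j}(T)$. There is no evident local mechanism that lets you read off the next trajectory step from $\evac(T)$ alone using only the induction hypothesis and semistandard inequalities, and your fallback of ``meeting in the middle'' from both ends does not close the gap either, since the two ends face the same obstruction. The standard arguments (Stanley's, or the growth-diagram viewpoint of Section~\ref{sec:main_result}) avoid this by encoding \emph{all} the promotions simultaneously: the trajectory of the $k$-marker is exactly the diagonal of leftmost shapes containing the box $(m,n)$ in the growth diagram, and this diagonal is, by construction, the column that encodes $\evac(T)$, hence the promotion chain of $\evac(T)$. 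If you want a self-contained proof, that global argument (or equivalently, unwinding the iterated-partial-promotion definition of evacuation) is the route to take; the boxwise induction as sketched does not go through.
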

\begin{proof}
The case when $i=0$ follows directly from the proof of \cite[Theorem 2.3]{stanley}, after modification to account for our labels.  Since $\pro^i (\evac(T)) = \evac(\pro^{-i}( T))$, the result for $i\neq 0$  follows by replacing $T$ by $\pro^{-i}( T)$ in the $i=0$ case.
\end{proof}

\begin{proof}[Proof of Lemma~\ref{lem:main}]
Fix $a\in \out(T,B)$.  In terms of markers, this means that there is a marker $M$ and an index $j$ such that  during the step $\pro^j (T) \longrightarrow \pro^{j+1} (T)$, marker $M$  slides out of box $B$ with label $a$.  By the remark above, we see that $\tau(\pro^{j-(k-a)}(T)) =(B_1,\ldots, B_\ell, \beta_1,\ldots, \beta_\ell)$ is the trajectory of the marker $M$. So for some $i$ we must have $B_i = B$ and $\beta_i = a$.  By Lemma~\ref{lem:traj=prompath} we may conclude that $a\in \out(\evac( T ),B)$. 

This argument certainly shows that $\out(T,B) \subset \out(\evac(T),B)$ as sets. Interchanging $T$ and $\evac(T)$ we have equality as sets.   To see that we actually have equality as multisets note that any two occurrences of $a$ in $\out(T,B)$ must slide out of $B$ at different points along the progression~(\ref{eq:promotionprogression}).

The second claim now follows by a few symmetries. If for a multiset $A$ we define $\overline{A}$ to be the multiset obtained by replacing each member $x$ of $A$ by $k+1-x$ then
\begin{align*}
\inn(T,B) &= \overline{\inn_{\pro^{-1}}(\evac(T), B^*)}= \overline{\out(\evac(T), B^*)} = \overline{\out(T, B^*)}= \overline{\inn_{\pro^{-1}}(T,B^*)} = \inn(\evac(T),B),
\end{align*}
where $B^*$ is the box corresponding to $B$ under $180^\circ$ rotation and $\inn_{\pro^{-1}}$ is defined the same as $\inn$ except that we replace $\pro$ by $\pro^{-1}$.  

\end{proof}

\section{Linear extensions of cominuscule posets}\label{sec:cominuscule}
In this paper, all posets will be assumed finite. A \emph{linear extension} of a poset $P$ is an order-preserving bijection onto a chain ${\bf d} := 1 < 2 < \dots < d$, where $d = |P|$. Observe that standard Young tableaux of shape $\lambda$ may be identified with linear extensions of $\lambda$, where we think of $\lambda$ as a poset in which each box is covered by those immediately below it and to its right. We write $\syt(P)$ for the set of linear extensions of a poset $P$. (Note that we do not generally have a notion corresponding to \emph{semistandard} tableaux.) There are analogous definitions of promotion and evacuation in this setting (cf.~\cite{stanley}), which we denote by $\pro$ and $\evac$ respectively. If $S$ is a set of elements in the poset $P$ and $T : P \to \bf{d}$ is a linear extension, we define similarly to before: \[ \sigma_S(T) := \sum_{s \in S} T(s).\]

We now prove a generalization of Theorem~\ref{thm:P--R} to the larger class of cominuscule posets. Although we define this class algebraically, it may also be described purely combinatorially. In defining this class of posets, we mostly follow the notation and exposition of \cite{thomas.yong:cominuscule}. We recommend \cite{billey.lakshmibai} and \cite{thomas.yong:cominuscule} for further details and references regarding these well-studied posets and associated geometry. 

Let $G$ be a complex connected reductive Lie group with maximal torus $T$. Let $W$ denote the Weyl group $N(T)/T$. Let $\Phi = \Phi^+ \sqcup \Phi^-$ denote the root system of $G$, as partitioned into positive and negative roots, with $\Delta$ denoting the choice of simple roots. The set $\Phi^+$ of positive roots has a poset structure $(\Phi^+, <)$ defined as the transitive closure of the covering relation $\alpha \lessdot \beta$ if and only if $\beta - \alpha \in \Delta$.

We say a simple root $\mu$ is \emph{cominuscule} if for every $\alpha \in \Phi^+$, $\mu$ appears with multiplicity at most 1 in the simple root expansion of $\alpha$. 
For $\mu$ cominuscule, let $\Lambda_\mu \subseteq (\Phi^+, <)$ be the subposet of positive roots for which $\mu$ appears in the simple root expansion. 
We call such a poset \emph{cominuscule}. These posets govern much of the geometry of the so-called \emph{cominuscule varieties}, which ``next to $\mathbb{P}^n$, may be considered as the simplest examples of projective varieties'' \cite[$\mathsection 9$]{billey.lakshmibai}. 
In the case $G = GL_n(\CC)$, every simple root is cominuscule, the corresponding cominuscule varieties are complex Grassmannians, and the corresponding cominuscule posets are rectangles.

The cominuscule posets are completely classified: there are three infinite families (rectangles, shifted staircases, propellers) and two exceptional examples. These are all illustrated in Figure~\ref{fig:cominuscule_posets}.

\begin{figure}[h]
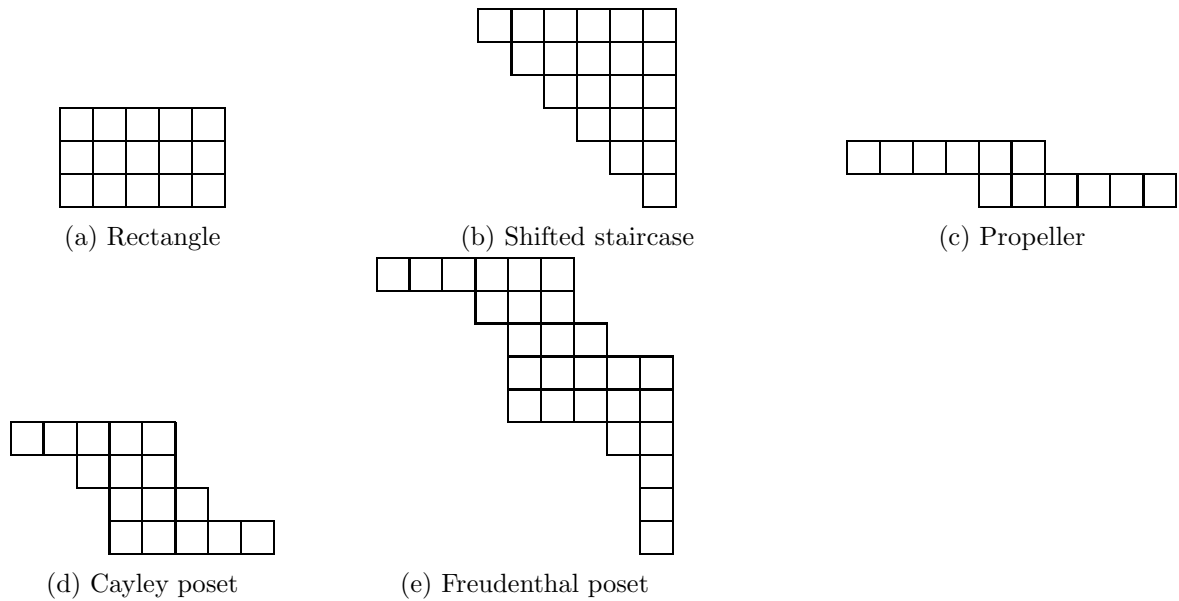

	\begin{subfigure}[b]{0.3\textwidth}
		\centering
		\ydiagram{5,5,5}
		\caption{Rectangle}
	\end{subfigure}
	\begin{subfigure}[b]{0.3\textwidth}
		\centering
		\ydiagram{6,1+5,2+4,3+3,4+2,5+1}
		\caption{Shifted staircase}
	\end{subfigure}
	\begin{subfigure}[b]{0.3\textwidth}
		\centering
		\ydiagram{6,4+6}
		\caption{Propeller}
	\end{subfigure}
	\begin{subfigure}[b]{0.3\textwidth}
		\centering
		\ydiagram{5,2+3,3+3,3+5}
		\caption{Cayley poset}
	\end{subfigure}
	\begin{subfigure}[b]{0.3\textwidth}
		\centering
		\ydiagram{6,3+3,4+3,4+5,4+5,7+2,8+1,8+1,8+1}
		\caption{Freudenthal poset}
	\end{subfigure}
\caption{The five families of cominuscule posets. The boxes are the elements of the poset, and each box is covered by any box immediately below it or immediately to its right. Rectangles may have arbitrary height and width. Shifted staircases have arbitrary width, and height equal to their width; hence a shifted staircase of width $n$ contains $\binom{n+1}{2}$ elements. Propellers consist of two rows of arbitrary but equal length, overlapping by two boxes in the center. The Cayley and Freudenthal posets are unique, containing 16 and 27 elements, respectively.}\label{fig:cominuscule_posets}
\end{figure}

The parabolic subgroups of $W$ are in canonical bijection with the subsets of $\Delta$. For $\mu$ cominuscule, let $w_\mu$ denote the longest element of the parabolic subgroup $W_\mu \leq W$ corresponding to the subset $\Delta \backslash \{ \mu \}$. It is not hard to show that $w_\mu$ acts as an involution on $\Lambda_\mu$. Following \cite[$\mathsection 2.2$]{thomas.yong:cominuscule}, we denote this action on $\Lambda_\mu$ by $\mathtt{rotate}$. For rectangles, propellers, and the Cayley poset, this action is exactly $180^\circ$ rotation. For shifted staircases and the Freudenthal poset, it is reflection across the antidiagonal. 

The following theorem generalizes Theorem~\ref{thm:P--R} to include nonrectangular cominuscule posets.

\begin{theorem}\label{thm:cominuscule}
Let $P$ be a cominuscule poset, $S \subseteq P$ a set of elements fixed under $\mathtt{rotate}$, and $\mathcal{C} = \langle c \rangle$, the cyclic group with $c$ acting on $\syt(P)$ by promotion. Then \[(\syt(P), \mathcal{C}, \sigma_S)\] is homomesic.
\end{theorem}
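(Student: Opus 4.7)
The plan is to replicate the logical structure of the proof of Theorem~\ref{thm:P--R}, substituting Thomas and Yong's cominuscule combinatorial framework for the rectangular one. Set $d := |P|$. The first step is to assemble the cominuscule analog of Theorem~\ref{thm:coxeter_rltn}, namely: for every $T \in \syt(P)$,
\begin{enumerate}[(a)]
\item $\evac^2(T) = T$,
\item $\evac \circ \pro(T) = \pro^{-1} \circ \evac(T)$,
\item $\pro^d(T) = T$,
\item $\evac(T)(x) = d + 1 - T(\mathtt{rotate}(x))$ for every $x \in P$.
\end{enumerate}
Properties (a)--(b) follow from the same Coxeter-relation argument used in the proof of Theorem~\ref{thm:coxeter_rltn}, once one observes that the toggles $\tau_i$ on $\syt(P)$ satisfy the braid-like relations (\ref{eq:relations}); property (c) is a theorem of Haiman for cominuscule posets; property (d) is the cominuscule generalization of the rotation/complement identity $\evac(T) = T^+$ and is known (see, e.g.,~\cite{thomas.yong:cominuscule}).

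The second step is the cominuscule analog of Lemma~\ref{lem:main_gd}: for every $T \in \syt(P)$ and every $x \in P$,
$$\dis(T, x) = \dis(\evac(T), x), \qquad \text{where } \dis(T, x) := \makeset{\pro^i(T)(x)}{0 \leq i < d}.$$
I would prove this by transplanting the growth-diagram argument of Section~\ref{sec:main_result}. Identify $T \in \syt(P)$ with its multichain $(T_{\leq j})_{0 \leq j \leq d}$ of order ideals in $P$, stack the multichains for the iterated promotions into a doubly-infinite offset array, and verify (as in the rectangular case) that the column containing the rightmost ideal of any row encodes the evacuation of that row. Darkening the ideals that contain $x$ and applying the same Dyck-path argument then equates the multiset of ranks of row-minimal shapes with that of column-minimal shapes, which is exactly the desired identity.

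Granting these two ingredients, Theorem~\ref{thm:cominuscule} follows just as Theorem~\ref{thm:P--R} did: from (b) and the cominuscule analog of Lemma~\ref{lem:main_gd}, $\dis(T,x) = \dis(\evac(T), x)$; property (d) then gives $\dis(T, x) = \makeset{d+1 - i}{i \in \dis(T, \mathtt{rotate}(x))}$. When $S$ is $\mathtt{rotate}$-stable, summing over $x \in S$ shows that the average of $\sigma_S$ on every $\pro$-orbit equals $\tfrac{(d+1)|S|}{2}$. The main obstacle is making the growth-diagram argument work uniformly for the non-rectangular cominuscule posets: shifted staircases call for the shifted version of the machinery, and the exceptional Cayley and Freudenthal posets must either be absorbed into a uniform cominuscule growth framework in the spirit of \cite{thomas.yong:cominuscule} or verified ad hoc. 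Once this combinatorial apparatus is in place, the rest of the deduction is formal.
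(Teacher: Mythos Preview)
Your proposal is correct and follows essentially the same route as the paper's proof. Two small points of contrast are worth noting. First, the paper obtains property~(d) by citing \cite[Lemma~5.2]{thomas.yong:ctc} rather than \cite{thomas.yong:cominuscule}. Second, and more substantively, the paper replaces $k$ in the growth-diagram argument by the \emph{orbit size} rather than by $d = |P|$; this makes the invocation of Haiman's theorem $\pro^d = \mathrm{id}$ unnecessary, since working directly with the orbit length gives $R_0$ and $R_\ell$ encoding the same tableau automatically.

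Your final paragraph overstates the difficulty. The growth-diagram argument from Section~\ref{sec:main_result} is entirely about multichains of order ideals, shaded ideals containing a fixed element, and a lattice-path count; none of this is specific to Young's lattice or to rectangular shapes. (The paper already remarks after Example~\ref{ex:shading_and_Dyck_path} that Lemma~\ref{lem:main_gd} holds for arbitrary partition shapes, and the same reasoning carries over to any finite poset.) No shifted machinery or ad hoc verification for the exceptional posets is needed; the only genuinely cominuscule-specific input is property~(d), and that is exactly what Thomas--Yong supply. So the ``obstacle'' you flag dissolves, and the proof is as short as the paper's.
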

\begin{proof}
Let $T \in \syt(P)$ with $P$ cominuscule. By \cite[Lemma~5.2]{thomas.yong:ctc}, $\evac(T)$ may be formed by applying $\mathtt{rotate}$ and reversing the alphabet (so $i$ becomes $|P| + 1 - i$). 

The theorem then follows from a poset analogue of Lemma~\ref{lem:main_gd}. For this the growth diagram proof of Lemma~\ref{lem:main_gd} may be copied nearly verbatim, using the cardinality of the promotion orbit in place of $k$.
\end{proof}

\section{Increasing tableaux and K-promotion}\label{sec:K}
For posets $P_1, P_2$, we say $\phi : P_1 \to P_2$ is \emph{strictly order-preserving} if $x < y$ implies $\phi(x) < \phi(y)$. Let $P$ be a finite poset. An \emph{increasing tableau} (of shape $P$) is a strictly order-preserving surjection $T : P \to {\bf d}$, where ${\bf d} := 1 < 2 < \dots < d$ and $d$ is potentially less than $|P|$. In the case where $P$ is a Ferrers poset, such tableaux may be realized as semistandard tableaux such that all rows and columns are strictly increasing and the set of entries is an initial segment of $\ZZ_{> 0}$. We denote by $\inc{P}{q}$ the set of increasing tableaux $T : P \to \bf{d}$ where $d = |P|-q$. Notice $\inc{P}{0} = \syt(P)$. 

Increasing tableaux perhaps appeared first in relation to the Edelman--Greene correspondence \cite{edelman.greene}, and later are implicit in early work on Aztec diamond tilings \cite{jockusch.propp.shor}. More recently their combinatorics has been further developed by H.~Thomas and A.~Yong \cite{thomas.yong:K} and others in service of K-theoretic Schubert calculus. (The definition of increasing tableaux in \cite{thomas.yong:K} is slightly more general, in that $T$ is not required to be surjective. However \cite{thomas.yong:K} makes no use of this additional generality, and in light of the enumerative results of \cite{pechenik}, we believe the more restrictive definition given here is of greater interest.)

Following \cite{thomas.yong:K, pechenik}, we define an operation of K-promotion on $\inc{P}{q}$. For $T \in \inc{P}{q}$ and $p \in P$, we think of $T(p)$ as a label on the element $p$. For every pair of labels $\{a, b\}$, we define an operation $\mathtt{switch}_{a, b}$. Every element labeled $a$ is relabeled $b$ if it covers or is covered by an element labeled $b$. (For tableaux, this is equivalent to the labels $a,b$ appearing in adjacent boxes.) Simultaneously, every element labeled $b$ is relabeled $a$ if it covers or is covered by a element labeled $a$. (The result of this operation will not generally be an increasing tableau.)

The K-promotion of $T \in \inc{P}{q}$ is formed as follows. First replace each label 1 with a bullet~$\bullet$. Then for $2 \leq i \leq |P| - q$, successively apply the operators $\mathtt{switch}_{i,\bullet}$. Finally reduce each label by 1, and replace each bullet with the label $|P| - q$. We denote the operation of K-promotion by $\pro_K$.

\begin{example}\label{ex:K-promotion}
Let \[ T = \ytableaushort{13,24,45} \] be an increasing tableau. Then the K-promotion $\pro_K(T)$ is contructed by the following sequence of modifications:

\[\ytableaushort{13,24,45} \longrightarrow \ytableaushort{\bullet 3, 24,45} \longrightarrow \ytableaushort{23,\bullet 4,45} \longrightarrow \ytableaushort{23,\bullet 4,45} \longrightarrow \ytableaushort{23,4 \bullet,\bullet 5} \longrightarrow \ytableaushort{23,45,5 \bullet} \longrightarrow \ytableaushort{12,34,45}\] \qed
\end{example}

Note that although when $P$ is a Ferrers poset increasing tableaux may be realized as a subclass of semistandard tableaux, K-promotion of such an increasing tableau $T$ generally differs from its promotion as a semistandard tableau. The two concepts, however, agree in the case where $T$ is in fact standard.

\begin{example} 
Let $T$ be the tableau of Example~\ref{ex:K-promotion}, {\bf thought of as a semistandard tableau}. Then its promotion is $\ytableaushort{12,33,45}$. Notice that $\pro(T)$ is here not even an increasing tableau. \qed
\end{example}

For $T \in \inc{P}{q}$, let $T_{\leq j}$ denote the order ideal consisting of those $p \in P$ with $T(p) \leq j$. As previously discussed for semistandard tableaux, $T$ may be encoded by the chain of order ideals $(T_{\leq j})_{0 \leq j \leq |P|-q}$. In \cite[$\mathsection 4$]{thomas.yong:K}, H.~Thomas and A.~Yong define the K-evacuation $\evac_K(T)$ of an increasing tableau $T$ to be given by the chain of order ideals $(\pro_K^{|P|- q - j}(T)_{\leq j})_{0 \leq j \leq |P|-q}.$ 

\begin{example}
For the $T$ of Example~\ref{ex:K-promotion}, $\evac_K(T) = \ytableaushort{12,24,35}$, as it is encoded by the chain \[\left(\ytableausetup{smalltableaux} \emptyset - \ydiagram{1} - \ydiagram{2,1} - \ydiagram{2,1,1} - \ydiagram{2,2,1} - \ydiagram{2,2,2}\right) \ytableausetup{nosmalltableaux}.\] \qed
\end{example}

One might hope for Theorem~\ref{thm:cominuscule} to generalize to $\inc{P}{q}$ for any $q$ and any cominuscule poset $P$. However this is not the case:

\begin{example}\label{ex:increasing_counterexample}
Consider
\[ T = \ytableaushort{1235,2{*(black)\textcolor{white}{4}}{*(black)\textcolor{white}{5}}7,3689} \hspace{.5in} \text{and} \hspace{.5in} U = \ytableaushort{1456,2{*(black)\textcolor{white}{6}}{*(black)\textcolor{white}{7}}8,3789}\] and let $S$ be the $\mathtt{rotate}$-fixed set of black boxes. The K-promotion orbits $\mathcal{O}_T, \mathcal{O}_U$ of $T, U$ respectively are both of size 9. However
it may be computed that \[ \frac{1}{9} \sum_{A \in \mathcal{O}_T} \sigma_S(A) = \frac{91}{9}, \text{ while } \frac{1}{9} \sum_{B \in \mathcal{O}_U} \sigma_S(B) = 10.\]
\qed
\end{example}

Say a pair $(P,q)$ (with $P$ a cominuscule poset and $0 \leq q \leq |P|$) is \emph{homomesic} if for any $S \subseteq P$ fixed under $\mathtt{rotate}$, $(\inc{P}{q}, \mathcal{C}, \sigma_S)$ is homomesic. It seems an interesting question to classify all homomesic pairs $(P,q)$. Theorem~\ref{thm:cominuscule} shows that $(P,0)$ is homomesic for all $P$. Example~\ref{ex:increasing_counterexample} shows that $({\bf 3} \times {\bf 4}, 3)$ is \emph{not} homomesic. The following theorem shows, however, that $({\bf 2} \times {\bf n}, q)$ is always homomesic.

\begin{theorem}\label{thm:2row_K}
Let $P$ be a $2 \times n$ rectangle for any $n$, and let  $S \subseteq P$ be a set of elements fixed under $180^\circ$ rotation. Then for any $q$, $(\inc{P}{q}, \mathcal{C}, \sigma_S)$ is homomesic.
\end{theorem}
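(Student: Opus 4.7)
My plan is to follow the template of the proof of Theorem~\ref{thm:P--R}: establish a K-theoretic analogue of Lemma~\ref{lem:main_gd}, namely $\dis_K(T, B) = \dis_K(\evac_K(T), B)$ for every $T \in \inc{2 \times n}{q}$ and every box $B$, where $\dis_K$ is defined by replacing $\pro$ with $\pro_K$ in the definition of $\dis$. Given this, the homomesy follows by exactly the calculation from the proof of Theorem~\ref{thm:P--R}: one concludes that $\dis_K(T,B) = \makeset{2n-q+1-i}{i\in \dis_K(T,B^*)}$, where $B^*$ is the $180^\circ$ rotation of $B$, so the average of $\sigma_{\{B\}}$ on any $\pro_K$-orbit equals $(2n-q+1)/2$, independent of the orbit; hence the average of $\sigma_S$ on any orbit is $(2n-q+1)|S|/2$.

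The straightforward inputs, drawn from Thomas--Yong~\cite{thomas.yong:K} and Pechenik~\cite{pechenik}, are the K-analogues of Theorem~\ref{thm:coxeter_rltn}: $\evac_K^2 = \mathrm{id}$, $\evac_K \circ \pro_K = \pro_K^{-1} \circ \evac_K$, $\pro_K$ has order dividing $2n - q$ on $\inc{2 \times n}{q}$, and $\evac_K(T) = T^+$, where $T^+$ denotes $180^\circ$ rotation combined with the complementation $i \mapsto 2n - q + 1 - i$. These are precisely the ingredients needed to run the Theorem~\ref{thm:P--R} argument once the analogue of Lemma~\ref{lem:main_gd} is in hand.

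The main obstacle is proving this K-analogue of Lemma~\ref{lem:main_gd}. My plan is to build a K-growth diagram that is doubly infinite in the same format as in Section~\ref{sec:main_result}, whose rows encode the chains of order ideals $(T_{\leq j})$ associated to the successive K-promotes of $T$. Two adjacent shapes in a row may now differ by one or two cells, since a label can occupy two cells of an increasing tableau, so this is not a classical Fomin growth diagram; nonetheless the rightmost column through any row should still encode $\evac_K$ of the tableau that row encodes, which is essentially the Thomas--Yong definition of K-evacuation. With this established, the argument from the proof of Lemma~\ref{lem:main_gd} pairing row-minimal and column-minimal darkened shapes carries over, but the associated lattice path must be allowed longer vertical jumps (analogous to a Motzkin or Schr\"oder path) to accommodate two-cell transitions; since the path still begins and ends at the same height, up-steps and down-steps at each height remain balanced in multiset, yielding $\dis_K(T,B) = \dis_K(\evac_K(T),B)$. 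What makes the analysis feasible in the $2 \times n$ setting is that every order ideal in the chain is a partition with at most two parts, severely constraining the possible transitions between consecutive shapes; this allows both the claim ``column encodes $\evac_K$'' and the balanced-path analysis to be verified by direct case check. Example~\ref{ex:increasing_counterexample} shows that some such restriction on the shape is essential, so we should not expect this approach to generalize beyond two rows.
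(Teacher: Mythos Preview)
Your approach is essentially the same as the paper's: establish a K-analogue of Lemma~\ref{lem:main_gd} via K-growth diagrams, then invoke the facts from \cite{pechenik} that $\pro_K$ has order $2n-q$ and $\evac_K(T)=T^+$ on $\inc{2\times n}{q}$, together with the general identities $\evac_K^2=\mathrm{id}$ and $\evac_K\circ\pro_K=\pro_K^{-1}\circ\evac_K$, and conclude exactly as in the proof of Theorem~\ref{thm:P--R}.

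One point of emphasis is slightly off. You locate the need for the $2\times n$ hypothesis inside the growth-diagram argument itself, anticipating a case check on two-row order ideals and a modified path with longer jumps. In fact the K-analogue of Lemma~\ref{lem:main_gd} goes through for arbitrary shapes with the original proof essentially verbatim: the ``column encodes $\evac_K$'' statement is the Thomas--Yong \emph{definition} of $\evac_K$, and the path is built from \emph{ranks} (positions along rows/columns of the diagram), which still change by exactly one at each step regardless of how many cells are added, so no Schr\"oder-type steps arise. The $2\times n$ restriction enters only through the two Pechenik inputs you already list---the order of $\pro_K$ and the identification $\evac_K(T)=T^+$---and it is the failure of the latter, not of the growth-diagram lemma, that accounts for Example~\ref{ex:increasing_counterexample}.
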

\begin{proof}
By \cite[Theorem~1.3]{pechenik}, the order of $\pro_K$ on $\inc{P}{q}$ is $2n-q$. The fact that in this case K-evacuation is the same as $180^\circ$ rotation plus alphabet reversal is \cite[Proposition~3.9]{pechenik}. The theorem then follows from an analogue of Lemma~\ref{lem:main_gd}. For this the growth diagram proof of Lemma~\ref{lem:main_gd} may be copied nearly verbatim, using K-growth diagrams and replacing every instance of $k$ in that proof with $2n-q$ (the order of $\pro_K$).

The construction of K-growth diagrams for increasing tableaux is exactly analogous to the construction of growth diagrams for semistandard tableaux; in the rows of the diagram, one merely writes the chains in Young's lattice that encode successive K-promotions, instead of promotions. It is easy to show then that Theorem~\ref{thm:coxeter_rltn}(a) and (b) hold also for $\evac_K$ and $\pro_K$. More details and examples of this construction appear in \cite[$\mathsection 3$]{pechenik} and \cite[$\mathsection2,4$]{thomas.yong:K}.
\end{proof}

\section*{Acknowledgements}
OP was supported during this project by a NSF Graduate Research Fellowship and NSF grant DMS 0838434 EMSW21MCTP: Research Experience for Graduate Students. 

The authors are grateful to Darij Grinberg, Tom Roby, and Dominic Searles for helpful conversations, and to Hugh Thomas and Alexander Yong for directing them to \cite{thomas.yong:ctc} for a characterization of cominuscule evacuation. We also thank David Speyer for helpful comments on an earlier draft.  

\bibliographystyle{siam}
\hspace*{1cm}
\bibliography{homomesy}

\end{document}